\def\P{\mathds{P}}
\def\E{\mathds{E}} %
\renewcommand{\d}{\mathrm{d}}
\newcommand{\R}{\mathds{R}}                     
\newcommand{\abs}[1]{\left\vert#1\right\vert}
\newcommand{\mbf}[1]{\mathbf #1}
\newcommand{\eq}{\begin{equation}}
\newcommand{\qe}{\end{equation}}
\newcommand{\G}{\mbf \Gamma}
\newcommand{\Lambert}[1]{\mathds W_{#1}}
\def\ni{\noindent}
\def \UN{\mathds 1}
\newtheoremstyle{theo}
{}
{}
{\itshape}
{\parindent}
{\bf}
{\ ---}
{.5em}
{}%
\theoremstyle{theo}
\newtheorem{theorem}{Theorem}
\newtheorem{lemma}{Lemma}
\newtheorem{exemple}{Exemple}
\newtheoremstyle{def}%
{}
{}
{\itshape}
{\parindent}
{\bf}
{\ ---}
{.5em}
{}%
\theoremstyle{def}
\newtheorem*{definition}{Definition}
\theoremstyle{remark}
\newtheorem*{remark}{Remark}
\title[A Rice method proof of NSP]{A Rice method proof of the Null-Space Property over the Grassmannian}
\date{\today}
\keywords{Rice Method; High-dimensional statistics; $\ell_{1}$-minimization; Null-Space Property; Random processes theory; }
\subjclass[2010]{62J05; 62H12; 62F12; 62E20;} 
\author{J.-M. Aza\"is}\address{JMA is with the Institut de Math\'ematiques de Toulouse (CNRS UMR 5219). Universit\'e Paul Sabatier, 118 route de Narbonne, 31062 Toulouse, France.}
\email{jean-marc.azais@math.univ-toulouse.fr}
\author{Y. De Castro}\address{YDC is with the Laboratoire de Math\'ematiques d'Orsay, Univ. Paris-Sud, CNRS, Universit\'e Paris-Saclay, 91405 Orsay, France.}
\email{yohann.decastro@math.u-psud.fr}
\author{S. Mourareau}\address{SM is with the Institut de Math\'ematiques de Toulouse (CNRS UMR 5219). Universit\'e Paul Sabatier, 118 route de Narbonne, 31062 Toulouse, France.}
\email{stephane.mourareau@math.univ-toulouse.fr}
\begin{document}

\begin{abstract}
The Null-Space Property (NSP) is a necessary and sufficient condition for the recovery of the largest coefficients of solutions to an under-determined system of linear equations. Interestingly, this property governs also the success and the failure of recent developments in high-dimensional statistics, signal processing, error-correcting codes and the theory of polytopes. 

Although this property is the keystone of $\ell_{1}$-minimization techniques, it is an open problem to derive a closed form for the phase transition on NSP. In this article, we provide the first proof of NSP using random processes theory and the Rice method. As a matter of fact, our analysis gives non-asymptotic bounds for NSP with respect to unitarily invariant distributions. Furthermore, we derive a simple sufficient condition for NSP. 
\end{abstract}
\maketitle
%

\section{Introduction}
\subsection{Null-Space Property}
One of the simplest inverse problem can be described as follows: given a matrix $X\in\R^{n\times p}$ and $y\in\mathrm{Im}(X)$, can we faithfully recover $\beta^{\star}$ such that the identity $y=X\beta^{\star}$ holds? In the ideal case where $n\geq p$ and the matrix $X$ is one to one (namely, the model is identifiable), this problem is elementary. However, in view of recent applications in genetics, signal processing, or medical imaging, the frame of high-dimensional statistics is governed by the opposite situation where $n< p$. To bypass the limitations due to the lack of identifiability, one usually assumes that the matrix $X$ is at random and one considers the $\ell_{1}$-minimization procedure \cite{MR1639094}:
\eq\label{prog:BP}
\tag{$P_{\ell_1}$}
\Delta_{X}(\beta^{\star})\in\arg\min_{X\beta=X\beta^{\star}}\lVert \beta\lVert_{1}\,,
\qe
where $\beta^{\star}\in\R^{p}$ is a ``target'' vector we aim to recover. Interestingly, Program \eqref{prog:BP} can be solved efficiently using linear programming, e.g. \cite{MR2243152}. Furthermore, the high-dimensional models often assume that the target vector $\beta^{\star}$ belongs to the space $\Sigma_{s}$ of $s$-sparse vectors:
\[
\Sigma_{s}:=\{\beta\in\R^{p}\,,\ \lVert\beta\lVert_{0}\leq s\}\,,
\]
where $\lVert\beta\lVert_{0}$ denotes the size of the support of $\beta$. Note that this framework is the baseline of the flourishing Compressed Sensing (CS), see \cite{MR2236170,MR2241189,MR2449058,CGLP} and references therein. A breakthrough brought by CS states that if the matrix $X$ is drawn at random (e.g. $X$ has i.i.d. standard Gaussian entries) then, with overwhelming probability, one can faithfully recovers $\beta^{\star}\in\Sigma_{s}$ using \eqref{prog:BP}. More precisely, the interplay between randomness and $\ell_{1}$-minimization shows that with only $ n=\mathcal O(s\log(p/s))$, one can faithfully reconstruct any $s$-sparse vector $\beta^{\star}$ from the knowledge of $X$ and $y:=X\beta^{\star}$. Notably, this striking fact is governed by the Null-Space Property (NSP).

\begin{definition}[Null-Space Property of order $s$ and dilatation $C$]
Let $0< s< p$ be two integers and $G$ be a sub-space of $\R^p$. One says that the sub-space $G$ satisfies $\mathrm{NSP}(s,C)$, the Null-Space Property of order $s$ and dilatation $C\geq1$, if and only if:
 \eq
 \notag
  \forall\,\gamma\in G\,,\ \forall\,S\subset\{1,\ldots,p\}\ \mathrm{s.t.}\ \abs S\leq s\,,\quad C\lVert\gamma_{S}\lVert_1\leq  \lVert\gamma_{S^c}\lVert_1\,,
 \qe
where $S^c$ denotes the complement of $S$, the vector $\gamma_ S$ has entry equal to $\gamma_i$ if $i\in S$ and $0$ otherwise, and $\abs S$ is the size of the set $S$.
\end{definition}
\noindent
As a matter of fact, one can prove \cite{MR2449058} that the operator $\Delta_{X}$ is the identity on $\Sigma_{s}$ if and only if the kernel of $X$ satisfies $\mathrm{NSP}(s,C)$ for some $C>1$.

\begin{theorem}[\cite{MR2449058}]\label{thm:Cohen}
For all $\beta^{\star}\in\Sigma_{s}$ there is a unique solution to \eqref{prog:BP} and $\Delta_{X}(\beta^{\star})=\beta^{\star}$ if and only if the nullspace $\ker(X)$ of the matrix $X$ enjoys $\mathrm{NSP}(s,C)$ for some $C>1$. Moreover, if $\ker(X)$ enjoys $\mathrm{NSP}(s,C)$ for some $C>1$ then for all $\beta^{\star}\in\R^{p}$,
 \[
   \lVert\beta^{\star}-\Delta_{X}(\beta^{\star})\lVert_1\leq \dfrac{2(C+1)}{C-1}\min_{|S|\leq s}\lVert\beta^{\star}-\beta^{\star}_{S}\lVert_1\,.
  \]
\end{theorem}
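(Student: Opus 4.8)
The plan is to work throughout with the affine feasible set of \eqref{prog:BP}, which is $\beta^{\star}+\ker(X)$, so that every competitor is of the form $\beta^{\star}+\gamma$ with $\gamma\in\ker(X)$, and to estimate $\ell_{1}$-norms by splitting them across an index set $S$ with $\abs S\le s$ and its complement $S^{c}$. For the sufficiency direction (\emph{if} $\ker(X)$ enjoys $\mathrm{NSP}(s,C)$ with $C>1$, then exact recovery holds), I would fix $\beta^{\star}\in\Sigma_{s}$ with support contained in such an $S$ and an arbitrary nonzero $\gamma\in\ker(X)$. Splitting $\lVert\beta^{\star}+\gamma\rVert_{1}=\lVert(\beta^{\star}+\gamma)_{S}\rVert_{1}+\lVert(\beta^{\star}+\gamma)_{S^{c}}\rVert_{1}$, applying the reverse triangle inequality on $S$ (where $\beta^{\star}$ is supported) and using $\beta^{\star}_{S^{c}}=0$, one obtains $\lVert\beta^{\star}+\gamma\rVert_{1}\ge\lVert\beta^{\star}\rVert_{1}+\lVert\gamma_{S^{c}}\rVert_{1}-\lVert\gamma_{S}\rVert_{1}$. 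Since $\mathrm{NSP}(s,C)$ gives $\lVert\gamma_{S^{c}}\rVert_{1}-\lVert\gamma_{S}\rVert_{1}\ge(C-1)\lVert\gamma_{S}\rVert_{1}\ge0$, and a one-line case analysis (if $\lVert\gamma_{S}\rVert_{1}=0$ then $\gamma\ne0$ forces $\lVert\gamma_{S^{c}}\rVert_{1}>0$) shows the inequality is strict whenever $\gamma\ne0$, it follows that $\beta^{\star}$ is the unique minimizer.

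For the necessity direction I would test the recovery hypothesis on the $s$-sparse vector $\beta^{\star}:=-\gamma_{S}$, for an arbitrary nonzero $\gamma\in\ker(X)$ and $\abs S\le s$. Because $X\gamma=0$ gives $X\gamma_{S^{c}}=-X\gamma_{S}=X(-\gamma_{S})$, the vector $\gamma_{S^{c}}$ is feasible for \eqref{prog:BP} with target $\beta^{\star}=-\gamma_{S}$, and it is distinct from $\beta^{\star}$ (disjoint supports, $\gamma\ne0$). Uniqueness of the minimizer then forces $\lVert\gamma_{S}\rVert_{1}<\lVert\gamma_{S^{c}}\rVert_{1}$, which is exactly the strict ($C=1$) form of the null-space condition.

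The main obstacle is upgrading this strict, pointwise inequality to a \emph{uniform} dilation $C>1$ as demanded by the Definition. I expect to handle this by a compactness argument: for each fixed $S$, minimize the continuous map $\gamma\mapsto\lVert\gamma_{S^{c}}\rVert_{1}$ over the closed set $\{\gamma\in\ker(X):\lVert\gamma_{S}\rVert_{1}=1\}$. A minimizing sequence has bounded objective and normalized $S$-part, hence is bounded, so finite-dimensionality yields a minimizer $\gamma^{*}$ with $\gamma^{*}_{S}\ne0$; the strict inequality then gives $m_{S}:=\lVert\gamma^{*}_{S^{c}}\rVert_{1}>1$. Setting $C:=\min_{\abs S\le s}m_{S}$, a minimum over finitely many index sets, produces $C>1$, and the case $\gamma_{S}=0$ being trivial, this establishes $\mathrm{NSP}(s,C)$. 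This is the only genuinely delicate point; the remaining steps are norm bookkeeping.

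Finally, for the stability estimate I would take $\beta^{\star}\in\R^{p}$ arbitrary, let $S$ index its $s$ largest-magnitude coordinates so that $\lVert\beta^{\star}_{S^{c}}\rVert_{1}=\min_{\abs S\le s}\lVert\beta^{\star}-\beta^{\star}_{S}\rVert_{1}=:\sigma$, and set $\gamma:=\Delta_{X}(\beta^{\star})-\beta^{\star}\in\ker(X)$. Expanding the optimality inequality $\lVert\beta^{\star}+\gamma\rVert_{1}\le\lVert\beta^{\star}\rVert_{1}$ with the same split and cancelling $\lVert\beta^{\star}_{S}\rVert_{1}$ rearranges to $\lVert\gamma_{S^{c}}\rVert_{1}\le\lVert\gamma_{S}\rVert_{1}+2\sigma$. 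Inserting $\mathrm{NSP}$ in the form $\lVert\gamma_{S}\rVert_{1}\le C^{-1}\lVert\gamma_{S^{c}}\rVert_{1}$ and solving the resulting scalar inequality gives $\lVert\gamma_{S^{c}}\rVert_{1}\le\tfrac{2C}{C-1}\,\sigma$ and then $\lVert\gamma_{S}\rVert_{1}\le\tfrac{2}{C-1}\,\sigma$; summing the two yields $\lVert\gamma\rVert_{1}\le\tfrac{2(C+1)}{C-1}\,\sigma$, which is the asserted bound.
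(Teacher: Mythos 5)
Your proof is correct, and since the paper only quotes this theorem from \cite{MR2449058} without reproducing a proof, there is no in-paper argument to compare against; your route (splitting $\ell_{1}$-norms across $S$ and $S^{c}$, testing recovery on $\beta^{\star}=-\gamma_{S}$ for necessity, a compactness/normalization argument to extract a uniform $C>1$, and the standard cancellation yielding $\lVert\gamma_{S^{c}}\rVert_{1}\leq\lVert\gamma_{S}\rVert_{1}+2\sigma$ for the stability bound) is precisely the classical Cohen--Dahmen--DeVore argument. The only detail worth stating explicitly is the degenerate case in the compactness step where $\{\gamma\in\ker(X):\lVert\gamma_{S}\rVert_{1}=1\}$ is empty for some $S$ (the constraint for that $S$ is then vacuous), which is trivial.
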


\noindent
Additionally, NSP suffices to show that any solution to \eqref{prog:BP} is comparable to the $s$-best approximation of the target vector $\beta^{\star}$. Theorem \ref{thm:Cohen} demonstrates that NSP is a natural property that should be required in CS and High-dimensional statistics. This analysis can be lead a step further considering Lasso \cite{MR1379242} or Dantzig selector \cite{MR2382644}. Indeed, in the frame of noisy observations, $\ell_{1}$-minimization procedures are based on sufficient conditions like Restricted Isometry Property (RIP) \cite{MR2382644}, Restricted Eigenvalue Condition (REC) \cite{MR2533469}, Compatibility Condition (CC) \cite{van2009conditions}, Universal Distortion Property (UDP) \cite{de2012remark}, or $H_{s,1}$ condition \cite{juditsky2011accuracy}. Note that all of these properties imply that the kernel of the matrix $X$ satisfies NSP. While there exists pleasingly ingenious and simple proofs of RIP, see \cite{CGLP} for instance, a direct proof of NSP (without the use of RIP) remains a challenging issue.

\subsection{Contribution}
Given $(\rho,\delta)\in]0,1[^{2}$, set $s_{n}=\lfloor \rho n\rfloor$ and $p_{n}=\lfloor \frac n\delta\rfloor$ where $\lfloor .\rfloor$ denotes the integer part. Consider a matrix $X(n,p_{n})\in\mathds R^{n\times p_{n}}$ with i.i.d. centered Gaussian entries. In this paper, we describe a region of parameters $(\rho,\delta)$ such that $\P[\ker(X(n,p_{n}))\ \mathrm{enjoys NSP}(s_{n},C)]$ tends to one as $n$ goes to infinity. Our result provides a new and simple description of such region of parameters $(\rho,\delta)$.
\begin{theorem}
\label{thm:BorneR}
Let $C\geq1$. For all $n\geq1$, set $s_{n}=\lfloor \rho n\rfloor$ and $p_{n}=\lfloor \frac n\delta\rfloor$. Let $G(n,p_n)$ be uniformly distributed on the Grassmannian $\mathrm{Gr}_{m}(\R^{p_n})$ where $m = p_n - n$.
If $\delta\geq(1+\pi/2)^{-1}$ and:
\begin{align*}
\rho \log&\left(\sqrt{\frac{\pi}{2 e C^2}} \frac{(1-\rho)^2}{\rho^2}\right) + \log\left(C e \frac{\sqrt{\rho (1-\delta) (1 + (C^2-1)\rho)}}{ (1-\rho) (1+(2C^2-1)\rho)\sqrt{\delta}}\right) \\ &+\frac{1}{\delta} \log \left(\sqrt{\frac{2}{e \pi}} \frac{1 + (2C^2-1)\rho}{(1-\rho)\sqrt{\delta(1-\delta)(1+(C^2-1)\rho)} }\right)  \leq 0 \label{trans:1}
\end{align*}
then $\P[G(n,p_{n})\ \mathrm{enjoys NSP}(s_{n},C)]$ tends exponentially to one as $n$ goes to infinity.
\end{theorem}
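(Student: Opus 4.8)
The plan is to realise the random subspace as a Gaussian field on a sphere and to read $\mathrm{NSP}$ off the sign of the infimum of an explicit piecewise-linear random field, to which the Kac--Rice (Rice) method applies. Since $G(n,p_n)$ is uniform on $\mathrm{Gr}_m(\R^{p_n})$, it has the law of the column span of a matrix $A\in\R^{p_n\times m}$ with i.i.d.\ standard Gaussian entries; writing $\gamma=At$ and $\gamma_i(t)=\langle a_i,t\rangle$, where $a_i$ are the i.i.d.\ $N(0,I_m)$ rows of $A$, and using $1$-homogeneity to restrict $t$ to the unit sphere $\mathbb S^{m-1}$, the characterisation by the worst support shows that $\mathrm{NSP}(s_n,C)$ holds if and only if $\inf_{t}\big(\lVert\gamma(t)\rVert_1-(C+1)\sum_{i=1}^{s_n}\gamma_i^\ast(t)\big)\ge 0$, where $\gamma^\ast$ is the decreasing rearrangement of $|\gamma|$. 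A union bound over the $\binom{p_n}{s_n}$ candidate supports, together with exchangeability of the rows $a_i$, reduces matters to a fixed support $S=\{1,\dots,s_n\}$: it suffices to bound $\binom{p_n}{s_n}\,\P\big[\inf_{t\in\mathbb S^{m-1}}V(t)<0\big]$, where $V(t)=\lVert\gamma_{S^c}(t)\rVert_1-C\lVert\gamma_S(t)\rVert_1$.

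Next I would invoke the Rice inequality $\P[\inf_t V<0]\le \E[N]$, where $N$ counts the local minima of $V$ on $\mathbb S^{m-1}$ at which $V<0$; this is licit because $\mathbb S^{m-1}$ is compact, so a negative infimum is attained at a local minimum. The field $V$ is linear on each sign cell $\{\,\mathrm{sgn}\,\gamma_i(t)=\epsilon_i\,\}$, so its minima split into strata indexed by the set $J$ of coordinates vanishing at the minimiser: an interior (smooth) minimum lies at $t=-b_\epsilon/\lVert b_\epsilon\rVert$ for the cell gradient $b_\epsilon$ and contributes one candidate per cell subject to a sign-consistency condition, while an isolated minimum on a kink forces $|J|=m-1$ of the forms $\gamma_i$ to vanish. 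On each stratum I would apply the Kac--Rice formula to the relevant Gaussian field — the tangential gradient of $V$, respectively the vector $(\gamma_i)_{i\in J}$ — weighting its zeros by the indicator that they are genuine minima (positive-definite Hessian, respectively membership of the projected smooth gradient in the subdifferential zonotope spanned by $\{a_i\}_{i\in J}$) and by $\{V<0\}$.

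Exchangeability then collapses the sums over the sign patterns $\epsilon$ and the sets $J$ into binomial prefactors multiplying a single Gaussian integral per stratum, so that the whole bound becomes an explicit product of binomials and Gaussian densities. Taking logarithms, I would estimate the binomials by Stirling and the Gaussian integrals by Laplace's method, and divide by $n$ to extract a large-deviation rate; the hypothesis $\delta\ge(1+\pi/2)^{-1}$ is exactly the condition placing us in the regime where the dominant stratum yields the displayed left-hand side, so that the inequality of the theorem asserts that this rate is $\le 0$, whence $\P[\mathrm{NSP\ fails}]\to 0$ exponentially and the conclusion follows.

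The hard part will be the non-smooth Kac--Rice bookkeeping: correctly classifying the local minima of the piecewise-linear field $V$, encoding minimality through the subdifferential zonotope rather than a Hessian, and carrying the resulting (degenerate) Gaussian integrals through Laplace's method down to closed form. The accompanying difficulty is to verify that the combinatorial prefactors and the density contributions from the dominant stratum combine into precisely the three logarithmic terms appearing in the statement, and that the remaining strata are exponentially negligible in the regime $\delta\ge(1+\pi/2)^{-1}$.
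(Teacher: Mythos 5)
Your overall architecture --- realising the subspace as a piecewise-linear Gaussian field on the sphere $\mathbb{S}^{m-1}$, bounding the failure probability by the expected number of local extrema via the Rice method, and encoding minimality at kinks through the subdifferential zonotope of the vectors $a_i$ --- is essentially the route the paper takes. However, your classification of the non-smooth local minima contains a genuine error. You assert that ``an isolated minimum on a kink forces $|J|=m-1$ of the forms $\gamma_i$ to vanish.'' This is false: local minima of $V$ occur on strata of every codimension $k=|J|=1,\dots,m-1$. Indeed, the stratum $\{\gamma_J=0\}\cap\mathbb{S}^{m-1}$ is a sub-sphere of dimension $m-1-k$ on which $V$ is locally linear, hence has isolated critical points; such a point is a local minimum of $V$ on the whole sphere as soon as, in addition, the projection of the smooth part of the gradient onto $\mathrm{Span}\{a_i\}_{i\in J}$ lies in the zonotope $\sum_{i\in J}[-a_i,a_i]$ --- an event of positive probability for every $k$. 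Your two-case dichotomy (smooth interior minima tested by a Hessian, versus fully degenerate minima with $|J|=m-1$ tested by the zonotope) therefore undercounts the local minima, and the resulting Rice bound is not an upper bound on $\P[\inf V<0]$. In the paper, every intermediate stratum contributes a term combining a tangential Kac--Rice factor on the sub-sphere ($h_C(s,m-k,p-k)$) with the normal zonotope probability $\mathbf{Q}(k,\tilde{p}_{C,k},m)$, the final bound \eqref{eq:BornMoche} is the sum over all $k\leq m-1$, and controlling the low-dimensional sub-spheres is precisely what forces the hypothesis $\delta\geq(1+\pi/2)^{-1}$ (concentration degrades on small spheres). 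So the strata you discard are not exponentially negligible; they are the source of the stated restriction on $\delta$.

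A secondary divergence: you take the union bound over the $\binom{p_n}{s_n}$ supports \emph{before} applying Rice and then study the fixed-support field $V$, whereas the paper works directly with the order-statistics field $X(t)=C\sum_{i\leq s}|Z_{(i)}(t)|-\sum_{i>s}|Z_{(i)}(t)|$ and absorbs the combinatorial factor $\binom{p-k}{s}$ into the pointwise tail bound $\psi_{p-k}(C)$ of Lemma \ref{lem:2}. Your decomposition is legitimate as an inequality, but it is not the one that produces the displayed constants; even after repairing the stratification you would still need to check that your prefactors reassemble into the three logarithmic terms of the statement, which is not automatic.
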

\begin{figure}[!t]
\begin{center}
\includegraphics[width=0.7\textwidth]{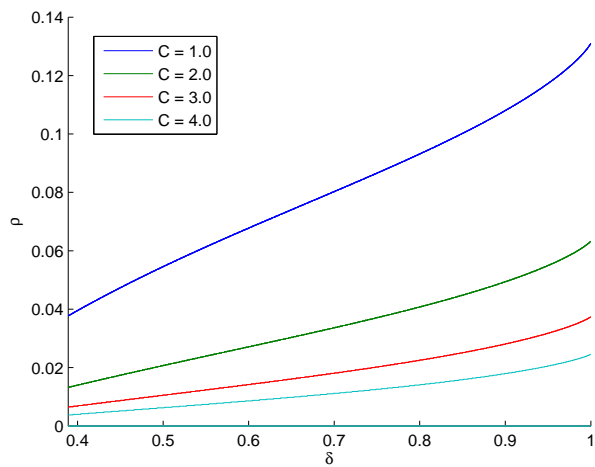}
\end{center}
\caption{The phase transition of Theorem \ref{thm:BorneR}. From top to bottom, $C=1,2,3,4$.}
\label{fig:phase}
\end{figure}
\begin{remark}
The condition 
$\{\delta\geq(1+\pi/2)^{-1}\}$
is a technical restriction. Indeed, in the proof, we need to consider an union bound on spheres of decreasing dimension. However, concentration bounds are less efficient on smaller spheres and lead to a limitation of the argument. This is explained into more details further. Interestingly, for $C=1$, the transition of Theorem \ref{thm:BorneR} compares to the phase transition of Donoho and Tanner \cite{donoho2005neighborliness}, see Figure \ref{fig:TransitionCompliquee}. However, numerically, our lower bound is less interesting when $n\ll p$ so we cannot extract the classical $n \sim c_1 s \log(c_2 p/s)$ result. To underline this fact, we only exhibit a result for $\delta \geq (1+\pi/2)^{-1}\simeq0.389$.
\end{remark}
\ni
\begin{exemple}
In the case $C=1$, we can compare our result (see Theorem \ref{thm:Main}) given by the set of $(\rho,\delta) \in ]0,1[^2$ such that $\delta \geq(1+\pi/2)^{-1}$ and such that:
\begin{align*}
\rho \log&\left[\sqrt{\frac{\pi}{2e}} \frac{(1-\rho)^2}{\rho^2}\right] + \log\left[e \frac{\sqrt{\rho (1-\delta)}}{ (1-\rho) (1+\rho)\sqrt{\delta}}\right]+\frac{1}{\delta} \log \left[\sqrt{\frac{2}{e \pi}} \frac{1 +\rho}{(1-\rho)\sqrt{\delta(1-\delta)} }\right] 
\end{align*}
is non-positive to the work of Donoho and Tanner \cite{donoho2005neighborliness} $($see Figure \ref{fig:CompPhaseDT}$)$. Observe that, up to a constant bounded by 2, we recover the initial result.
\end{exemple}
\begin{figure}[!t]
\center
\includegraphics[width=0.7\textwidth]{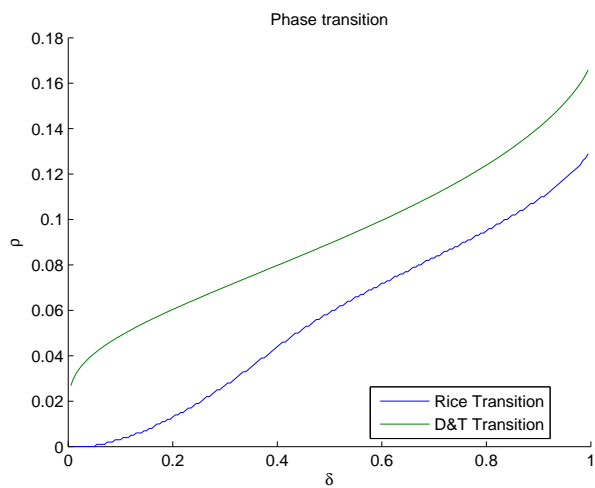}
\caption{The panel illustrates numerically the border of the region described by Theorem \ref{thm:Main} (blue line) for which NSP holds ($\Pi\simeq0$) and the strong phase transition of Donoho and Tanner (green line). Note that the region $(\rho,\delta)$ such that $\Pi\simeq0$, i.e. $\mathrm{NSP}(s,1)$ holds, is located below the curve. Simulations have been performed with $n=200.000$.}\label{fig:TransitionCompliquee}
\end{figure}

\begin{figure}[!t]
\begin{center}
\includegraphics[width=0.45\textwidth]{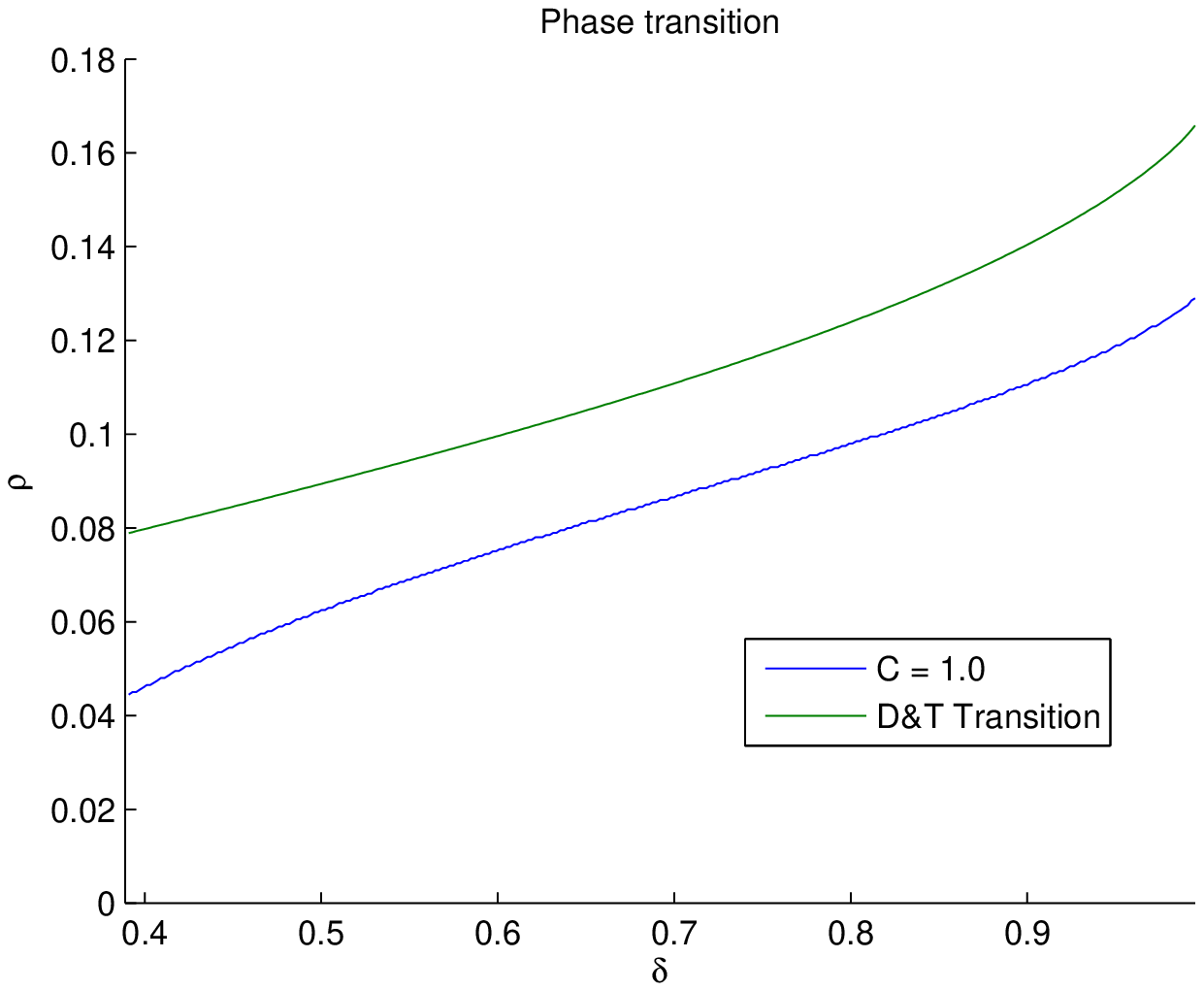}
\includegraphics[width=0.45\textwidth]{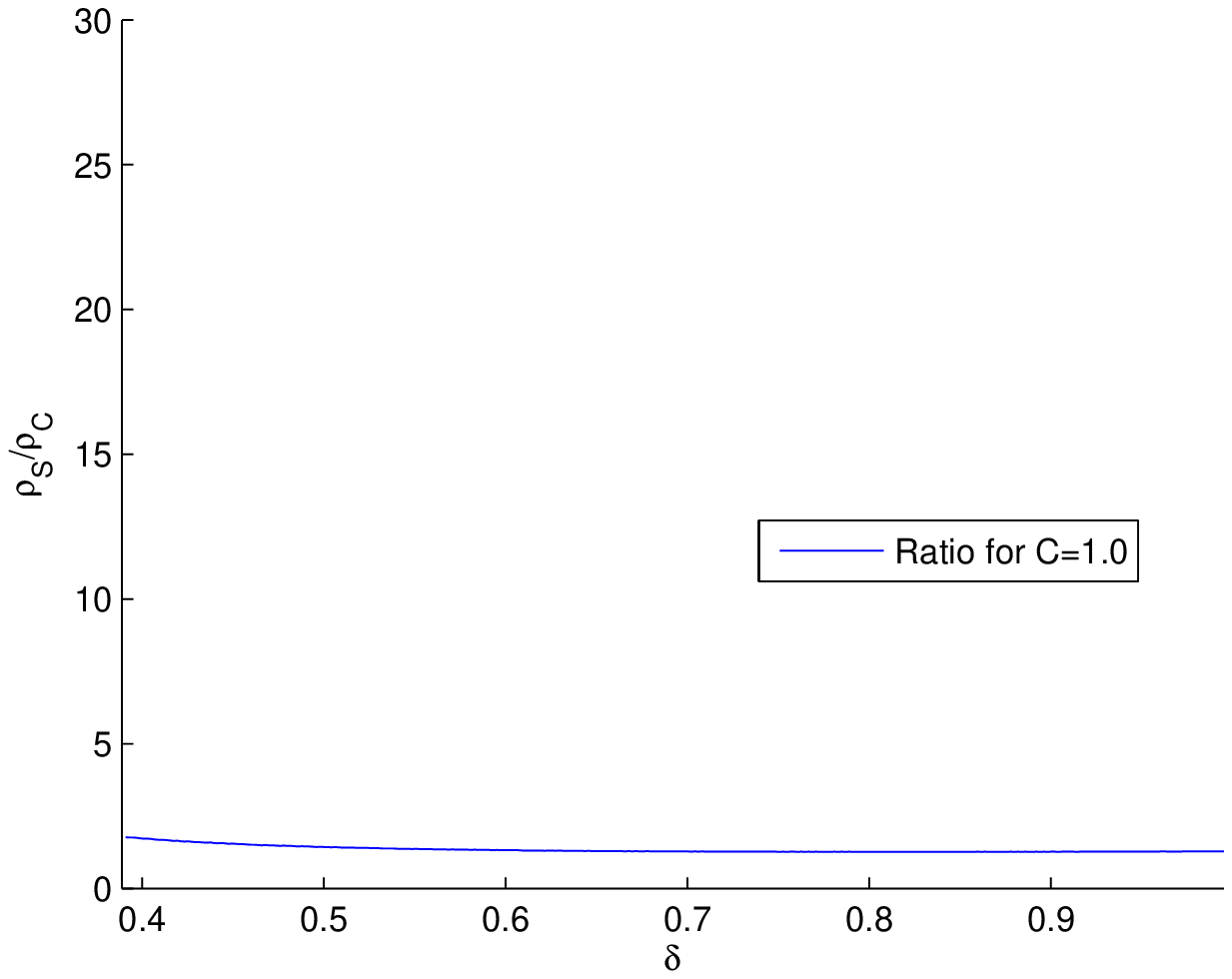}
\end{center}
\caption{On the left, comparison between the border of the region described by Theorem \ref{thm:BorneR} (blue line) and the strong phase transition of Donoho and Tanner (green line) for $\delta \geq 0.39$. On the right, ratio between the green and the blue line.}
\label{fig:CompPhaseDT}
\end{figure}

\ni
We outline that explicit expressions of lower bounds on the phase transition can be found in Section \ref{sec:Bounds}.

\subsection{Direct proofs of NSP with dilatation $C=1$}
\label{sec:DoTa}
To the best of our knowledge, all the direct proofs of NSP with dilatation $C=1$ are based either on integral convex geometry theory, Gaussian widths, the approximate kinematic formula, or empirical process theory. This section is devoted to a short review of some state-of-the-art results on direct proofs of NSP.
\subsubsection{Grassmann angles}
In a captivating series of papers \cite{donoho2005neighborliness,donoho2006high,donoho2009observed,donoho2009counting}, Donoho and Tanner have proved that the kernel of a matrix $X(n,p_{n})\in\mathds R^{n\times p_{n}}$ with i.i.d. centered Gaussian entries enjoys a phase transition, i.e. there exists a function $\rho_{S}$: $]0,1[\to]0,1[$ such that for all $(\rho,\delta)\in]0,1[^{2}$,
\[
\lim_{n \rightarrow +\infty}\P[\ker(X(n,p_{n}))\ \mathrm{enjoys NSP}(s_{n},1)] =\begin{cases}
     0 & \text{if}\ \rho>\rho_{S}(\delta), \\
     1 & \text{if}\ \rho<\rho_{S}(\delta),
\end{cases}
\]
where we recall that $s_{n}=\lfloor \rho n\rfloor$ and $p_{n}=\lfloor \frac n\delta\rfloor$. Moreover, they have characterized implicitly and computed numerically the function $\rho_{S}$ (note that the subscript $S$ stands for ``Strong'' since $\rho_{S}$ is often named the ``strong threshold''). Observe their approach is based on computation of Grassmann angles of a polytope due to Affentranger and Schneider \cite{affentranger1992random} and Vershik and Sporyshev \cite{vershik1992asymptotic}. Furthermore, note their phase transition is characterized implicitly using an equation involving inverse Mills ratio of the standard normal density. However, they have derived a nice explicit expression of the phase transition for small values of $\delta$, i.e. when $\delta\to 0$. Hence, they uncover that, in the regime $n\ll p$, NSP(s,1) holds when $n\geq C s\log(\frac ps)$ for $n$ large enough.

\subsubsection{Gaussian widths}
In recent works \cite{stojnic2009various,stojnic2013rigorous}, Stojnic has shown a simple characterization of the sign of the exponent appearing in the expression of the ``weak threshold'' given by Donoho and Tanner. Note the weak threshold governs the exact reconstruction by $\ell_{1}$-minimization of $s$-sparse vectors with prescribed support and signs, while NSP characterizes the exact reconstruction of all $s$-sparse vectors. In the paper \cite{stojnic2009various}, using "Gordon's escape through a mesh" theorem, Stojnic have derived a simpler implicit characterization of the strong threshold $\rho_{S}$. As in Donoho and Tanner's work,
observe this implicit characterization involves inverse Mill's ratio of the normal distribution and no explicit formulation of $\rho_{S}$ can be given.

Predating Stojnic's work, Rudelson and Vershynin (Theorem 4.1 in \cite{rudelson2008sparse}) were the first to use "Gordon's escape through the mesh" theorem to derive a non-asymptotic bound on sparse recovery. A similar result can found in the astonishing book of Foucart and Rauhut, see Theorem 9.29 in \cite{foucart2013mathematical}. Observe that these results hold with a probability at least $1-\alpha$ and their bounds depend on $\log(\alpha)$ so one needs one more step to derive a lower bound on the strong phase transition. We did not pursue in this direction.

\subsubsection{Approximate kinematic formula}
In the papers \cite{mccoy2012sharp,amelunxen2013living}, the authors present appealing and rigorous quantitative estimates of weak thresholds appearing in convex optimization, including the location and the width of the transition region. Recall that NSP is characterized by the strong threshold. Nevertheless, the weak threshold describes a region where NSP cannot be satisfied, i.e.
\[
\lim_{n \rightarrow +\infty}\P[G(n,p_{n})\ \mathrm{enjoys NSP}(s_{n},1)] =0\,.
\]
Based on the approximate kinematic formula, the authors have derived recent fine estimates of the weak threshold. Although their result has not been stated for the strong threshold, their work should provide, invoking a simple union bound argument, a direct proof of NSP with dilatation $C=1$.

\subsubsection{Empirical process theory}
\label{sec:Lecue}
Using empirical process theory, Lecu\'e and Mendelson \cite{guillaume2014compressed} gives a direct proof of NSP for matrices $X$ with sub-exponential rows. Although the authors do not pursue an expression of the strong threshold, their work shows that NSP with dilatation $C=1$ holds, with overwhelming probability, when:
\eq
\label{eq:NSPbound1}
n\geq c_{0}s\log(\frac{ep}{s})\,,
\qe
with $c_{0}>0$ a universal (unknown) constant. 

\subsubsection{A previous direct proof of NSP with dilatation $C\geq1$}
Using integral convex geometry theory as in Donoho and Tanner's works \cite{donoho2005neighborliness,donoho2006high,donoho2009observed,donoho2009counting}, Xu and Hassibi have investigated \cite{xu2008compressed,xu2011precise} the property $\mathrm{NSP}(s,C)$ for values $C\geq1$. Their result uses an implicit equation involving inverse Mill's ratio of the normal distribution and no explicit formulation of their thresholds can be derived. To the best of our knowledge, this is the only proof of $\mathrm{NSP}(s,C)$ for values $C>1$ predating this paper.

\subsection{Simple bounds on the phase transition}
As mentioned in Proposition 2.2.17 of \cite{CGLP}, if NSP holds then 
\eq
\label{eq:NSPbound2}
n\geq c_{1} s\log(\frac{c_{2}p}s)\,,
\qe 
with $c_{1},c_{2}>0$ are universal (unknown) constants. The result of Section \ref{sec:Lecue} shows that a similar bound is also sufficient to get NSP. What can be understood is that the true phase transition (as presented in \cite{donoho2005neighborliness,donoho2006high,donoho2009observed,donoho2009counting}) lies between the two bounds described by \eqref{eq:NSPbound1} (lower bound) and \eqref{eq:NSPbound2} (upper bound). Observe that these bounds can be equivalently expressed in terms of $\rho=s/n$ and $\delta=n/p$. Indeed, one has:
\eq
\label{eq:PhaseExpr}
\{n\geq c_{1}s\log(\frac{c_{2}p}{s})\}
\Leftrightarrow \{A_{\star}\rho\delta\log(A_{\star}\rho\delta))\geq -B_{\star}\delta\}\,,
\qe
where $A_{\star}=c_{2}^{-1}>0$ and $1/e\geq B_{\star}=c_{1}^{-1}c_{2}^{-1}>0$. Denote by $\Lambert 0$ (resp. $\Lambert{-1}$) the first (resp. the second) Lambert W function, see \cite{corless1996lambertw} for a definition. We deduce that \eqref{eq:PhaseExpr} is equivalent to:
\eq
\label{eq:PhaseExprFinale}
\rho\leq \frac{\exp(\Lambert{-1}(-B_{\star}\delta))}{A_{\star}\delta}\quad\mathrm{or}\quad\rho \geq\frac{\exp(\Lambert{0}(-B_{\star}\delta))}{A_{\star}\delta}\,.
\qe
Furthermore, the papers \cite{donoho2005neighborliness,donoho2006high,donoho2009observed,donoho2009counting} show that NSP enjoys a phase transition that can be described as a region $\rho\leq\rho_{S}(\delta)$, see Section \ref{sec:DoTa}. In particular, one can check that the region described by the right hand term of \eqref{eq:PhaseExprFinale} cannot be a region of solutions of the phase transition problem.
We deduce from \cite{CGLP,guillaume2014compressed} that $\rho_{S}$, the phase transition of Donoho and Tanner \cite{donoho2005neighborliness,donoho2006high,donoho2009observed,donoho2009counting}, can be bounded by the left hand term of \eqref{eq:PhaseExprFinale}. Hence, it holds the following result.

\begin{theorem}
\label{thm:BoundRHOS}
The strong threshold $\rho_{S}$ $($phase transition of NSP$)$ of Donoho and Tanner \cite{donoho2005neighborliness,donoho2006high,donoho2009observed,donoho2009counting} is bounded by:
\eq
\label{eq:BoundStrongTreshold}
\forall\delta\in]0,1[,\quad\frac{\exp(\Lambert{-1}(-B_{1}\delta))}{A_{1}\delta}\leq\rho_{S}(\delta)\leq\frac{\exp(\Lambert{-1}(-B_{2}\delta))}{A_{2}\delta}
\qe 
where $A_{1},A_{2}>0$ and $1/e\geq B_{1},B_{2}>0$ are universal (unknown) constants.
\end{theorem}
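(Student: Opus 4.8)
The plan is to read off the two bounds on $\rho_S$ from, respectively, a sufficient and a necessary condition for NSP, each of which has the form $n\ge c\,s\log(c'\,p/s)$, and to convert each such condition into a constraint on the pair $(\rho,\delta)$ by the very computation that already led from \eqref{eq:PhaseExpr} to \eqref{eq:PhaseExprFinale}. The lower bound will come from the sufficiency result \eqref{eq:NSPbound1} of Lecu\'e and Mendelson \cite{guillaume2014compressed}, and the upper bound from the necessity result \eqref{eq:NSPbound2} of \cite{CGLP}; the Donoho--Tanner phase transition \cite{donoho2005neighborliness} guarantees that, in the limit $n\to\infty$, NSP holds with probability tending to $1$ exactly on $\{\rho<\rho_S(\delta)\}$ and to $0$ on $\{\rho>\rho_S(\delta)\}$, so that both known bounds are inclusions of regions that I only have to translate into inequalities between thresholds.

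For the lower bound I would start from \eqref{eq:NSPbound1}, i.e. $n\ge c_0 s\log(ep/s)$, which is the special case $c_1=c_0$, $c_2=e$ of the generic condition; substituting $\rho=s/n$ and $\delta=n/p$ puts it in the form $A_1\rho\delta\log(A_1\rho\delta)\ge -B_1\delta$ with $A_1=1/e$ and $B_1=1/(c_0 e)$, where $c_0$ may be taken $\ge 1$ so that $B_1\le 1/e$. Solving for $\rho$ through the Lambert $W$ function exactly as in \eqref{eq:PhaseExprFinale} describes the sufficiency region as $\{\rho\le \exp(\Lambert{-1}(-B_1\delta))/(A_1\delta)\}$ together with the spurious upper branch $\{\rho\ge \exp(\Lambert{0}(-B_1\delta))/(A_1\delta)\}$; with $A_1=1/e$ one checks $\exp(\Lambert{0}(-B_1\delta))/(A_1\delta)>1$ for every $\delta\in]0,1[$, so the latter branch falls outside the admissible range. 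Since every point of the lower branch satisfies NSP, and since NSP of order $s$ implies NSP of every smaller order (so that $\{\rho<\rho_S(\delta)\}$ is a lower set in $\rho$), the lower branch is contained in $\{\rho<\rho_S(\delta)\}$, which yields $\rho_S(\delta)\ge \exp(\Lambert{-1}(-B_1\delta))/(A_1\delta)$.

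For the upper bound I would run the same computation on the necessity condition \eqref{eq:NSPbound2}, $n\ge c_1 s\log(c_2 p/s)$, obtaining $A_2\rho\delta\log(A_2\rho\delta)\ge -B_2\delta$ with $A_2=c_2^{-1}$ and $B_2=c_1^{-1}c_2^{-1}\le 1/e$, and hence, via \eqref{eq:PhaseExprFinale}, the necessity region $\{\rho\le \exp(\Lambert{-1}(-B_2\delta))/(A_2\delta)\}\cup\{\rho\ge \exp(\Lambert{0}(-B_2\delta))/(A_2\delta)\}$. Now $\{\rho<\rho_S(\delta)\}$ is contained in this region; being, for each fixed $\delta$, a connected interval accumulating at $\rho=0$, it cannot cross the gap separating the two Lambert branches (which is nonempty since $\Lambert{-1}(-B_2\delta)<\Lambert{0}(-B_2\delta)$) and must therefore lie entirely in the lower branch, forcing $\rho_S(\delta)\le \exp(\Lambert{-1}(-B_2\delta))/(A_2\delta)$. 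Combining the two inequalities gives \eqref{eq:BoundStrongTreshold}.

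The delicate points, rather than any lengthy computation, are the two structural facts that make the Lambert $W$ inversion legitimate and unambiguous. First, I must ensure $-B_i\delta\in[-1/e,0)$ for every $\delta\in]0,1[$, i.e. $B_i\le 1/e$; this is immediate for $B_1$ by enlarging $c_0$, and for $B_2$ it is the content of the normalisation $1/e\ge B_\star=c_1^{-1}c_2^{-1}$ recorded after \eqref{eq:PhaseExpr}, which also guarantees that the necessity condition is non-vacuous on all of $]0,1[$. Second, I must discard the upper ($\Lambert{0}$) branch in each region: this is where the connectedness and lower-set structure of $\{\rho<\rho_S(\delta)\}$, itself a consequence of the monotonicity of NSP in the sparsity order, is essential, since it is precisely what rules out the large-$\rho$ component as a piece of the phase-transition set. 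Once these two points are secured, the conclusion is a direct comparison of thresholds.
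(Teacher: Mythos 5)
Your argument is correct and is essentially the paper's own derivation: the theorem is obtained precisely by rewriting the necessary condition \eqref{eq:NSPbound2} of \cite{CGLP} and the sufficient condition \eqref{eq:NSPbound1} of \cite{guillaume2014compressed} in the $(\rho,\delta)$ coordinates via \eqref{eq:PhaseExpr}--\eqref{eq:PhaseExprFinale} and then discarding the $\Lambert{0}$ branch because the transition region has the form $\{\rho\leq\rho_{S}(\delta)\}$. Your justifications for discarding that branch (the upper branch exceeds $1$ in the sufficiency case, and the interval $\{\rho<\rho_{S}(\delta)\}$ cannot cross the gap between the two Lambert branches in the necessity case) merely make explicit the paper's one-line remark that the right-hand term of \eqref{eq:PhaseExprFinale} cannot describe the phase-transition region.
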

Although bounds \eqref{eq:NSPbound1} (lower bound) and \eqref{eq:NSPbound2} (upper bound) are known, their expressions as exponential of second Lambert W functions remain overlooked in the literature. As a matter of fact, Figure \ref{fig:Lambert} depicts a comparison between $\rho_{S}$ and:
\eq
\label{eq:LamberCurveFit}
\delta\mapsto\frac{\exp(\Lambert{-1}(-0.3394\delta))}{1.38\delta}\,,
\qe
where the strong threshold curve has been taken from \cite{donoho2005neighborliness,donoho2006high,donoho2009observed,donoho2009counting}. Roughly speaking, the curve \eqref{eq:LamberCurveFit} shows empirically that NSP holds when:
\[
n\geq 4s\log(0.7p/s)\,,
\]
for large values of $s,n,p$. Recall that it is still an open problem to find a closed form for the weak and the strong thresholds. In the regime $\delta\to0$, Donoho and Tanner \cite{donoho2005neighborliness,donoho2006high,donoho2009observed,donoho2009counting} have proved that the phase transition enjoys
\[
n\geq 2es\log(p/(\sqrt\pi s))\simeq 5.4s\log(0.6p/s)\,,
\]
in the asymptotic.
\begin{figure}[!t]
\begin{center}
\includegraphics[width=0.7\textwidth]{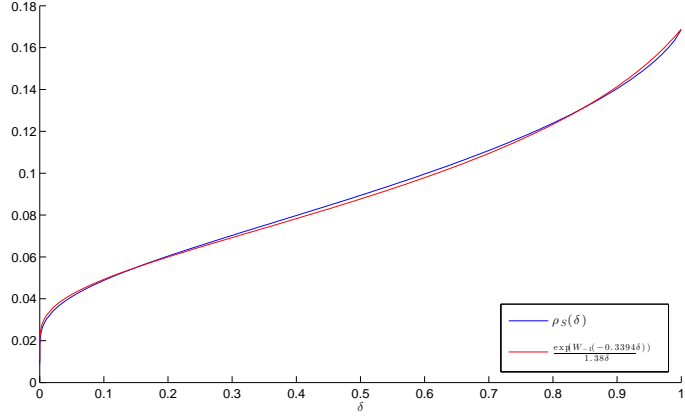}
\end{center}
\caption{The strong threshold $\rho_{S}$ and the mapping $\delta\mapsto\frac{\exp(\Lambert{-1}(-0.3394\delta))}{1.38\delta}$.}
\label{fig:Lambert}
\end{figure}

\subsection{Outline of the paper}
The main theorem (Theorem \ref{thm:Main}) is stated in the next section and Section 3 proves it. Section 4 is devoted to the proof of Theorem \ref{thm:BorneR}. All the numerical experiments can be reproduced using the codes available at \cite{MourareNSPsimu}.

\section{Rice method bound for NSP with dilatation $C\geq1$}
\label{sec:Bounds}
In this paper, we prove NSP following a newt path based on stochastic processes theory and more precisely on the Rice method \cite{azais2005distribution,azais2009level}. This latter is specially design to study the tail of the maximum of differentiable random processes or random fields. Similarly to the case of a deterministic function, it consists of studying the maximum through the zeros of the derivative. For the tail of a stationary Gaussian process defined on the real line, it is known from the work of Piterbarg \cite{piterbarg1981comparison} that it is super-exponentially sharp. 

However, the situation here is more involved than in the aforementioned papers since the considered process $X(t) $ is defined on the sphere (as in the recent work \cite{auffinger2013complexity} for example), non Gaussian and, last but not least, non differentiable. Note that the paper \cite{cucker2003expected} studies the maximum of locally linear process by a smoothing argument. A contrario to this paper, we will use a partition of the sphere and directly the Rice method. This provides a short and direct proof of $\mathrm{NSP}(s,C)$ for any value $C\geq1$. 

\subsection{An explicit sufficient condition}
Our main result reads as follows.
\begin{theorem}[Explicit lower bound]\label{thm:Main2}\label{thm:Main}
Let $0< s<n<p$ and $m=p-n$. Let $G(n,p)$ be the Kernel of $X(n,p)$, a $(n\times p)$ random matrix with i.i.d. centered Gaussian entries, then for all $C\geq1$, it holds:
\eq
\notag
\P[G(n,p)\ \mathrm{enjoys NSP}(s,C)] =1-\Pi\,,
\qe
with $\Pi$ satisfying
\begin{equation}
\label{eq:BornMoche}
\Pi\leq\sqrt \pi \Big[\sum_{k=0} ^{p-n-1}  {p \choose k}  \Big(\frac   {C^{2}s}{\tilde{p}_{C,k} }\Big) ^{\frac{ p-n-1-k}{ 2} } \frac{\G( \frac{2p-2k-n-1}{2})    }{\G(\frac{p-k}2)\G(\frac{p-n-k}2)} \psi_{p-k}(C) \mathbf{Q}(k,\tilde{p}_{C,k},m) \Big]\,,
\end{equation}
where $\G$ denotes the Gamma function, $\psi_{p-k}(C)$ is defined by Lemma \ref{lem:2}, $\mathbf{Q}(k,\tilde{p}_{C,k},m)$ is defined by Lemma \ref{lem:Zonotope} and $\tilde{p}_{C,k} := (C^2-1)s + p-k$.
\end{theorem}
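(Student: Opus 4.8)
**The plan is to reduce NSP to a maximum of a random process on a union of spheres, then apply the Rice method to bound the tail of that maximum.**

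First I would reformulate $\mathrm{NSP}(s,C)$ as a statement about a supremum. The condition $C\lVert\gamma_S\lVert_1\leq\lVert\gamma_{S^c}\lVert_1$ for all $\gamma\in G$ and all $\abs S\leq s$ is equivalent, after rescaling, to asking that a certain homogeneous functional of $\gamma$ stays below a threshold over the unit sphere of $G$. Concretely, the worst case over $S$ of size $s$ picks out the $s$ largest coordinates, so NSP fails exactly when $\max_{\gamma\in G,\,\lVert\gamma\lVert_2=1} F_C(\gamma)>0$ for the process $F_C(\gamma):=C\lVert\gamma_{S(\gamma)}\lVert_1-\lVert\gamma_{S(\gamma)^c}\lVert_1$, where $S(\gamma)$ selects the top-$s$ coordinates. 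Since $G=\ker(X)$ is uniform on $\mathrm{Gr}_m(\R^{p})$, I can realize $\gamma=P\,t$ with $P$ an orthonormal basis of $G$ and $t$ on the unit sphere $\mathds S^{m-1}$, reducing everything to a process $X(t)$ on $\mathds S^{m-1}$ with Gaussian-orthogonal-invariant law.

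Next I would dissect the sphere according to the combinatorial structure of the maximizer. The functional $F_C$ is piecewise-linear: on each region where the identity of the top-$s$ support and the signs of all coordinates are fixed, the process is smooth (indeed linear in the ambient coordinates). I would therefore partition $\mathds S^{m-1}$ into the cells indexed by (support, sign) patterns, and on each cell apply a union bound. This is the source of the sum over $k$ and the binomial $\binom pk$ in \eqref{eq:BornMoche}: $k$ counts the number of coordinates that have been zeroed or fixed, and the dimension of the relevant sub-sphere drops accordingly. On a fixed cell the event $\{F_C>0\}$ becomes a crossing event for a smooth process, to which the Rice formula applies: the probability that the maximum exceeds the level is controlled by the expected number of critical points above the level, and the Gaussian/Beta geometry of the uniform Grassmannian yields the ratio of Gamma functions $\G(\tfrac{2p-2k-n-1}{2})/(\G(\tfrac{p-k}2)\G(\tfrac{p-n-k}2))$ together with the power of $C^2 s/\tilde p_{C,k}$. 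The factors $\psi_{p-k}(C)$ and $\mathbf Q(k,\tilde p_{C,k},m)$, which I would take from Lemmas \ref{lem:2} and \ref{lem:Zonotope}, package respectively the one-dimensional Rice density integral and the zonotope/volume term coming from the sign-pattern count.

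The main obstacle is the \emph{non-differentiability} of $F_C$ across cell boundaries, where the top-$s$ support or a sign changes. The Rice method in its classical form requires a $C^1$ process, so I cannot apply it globally. The key idea is that the partition into sign-and-support cells makes $X(t)$ smooth on each piece, and the contribution of the boundaries (a lower-dimensional set) vanishes almost surely, so summing the per-cell Rice bounds is legitimate. The delicate bookkeeping is to (i) verify that a global maximum above the level forces at least one cell to carry a crossing, justifying the union bound, and (ii) track how the effective dimension and the covariance of the restricted process depend on $k$, so that the per-cell Rice integrals assemble into exactly the closed form in \eqref{eq:BornMoche}. I expect the Gamma-function ratio to emerge from the marginal density of the squared norm of a Gaussian projection (a Beta-type computation), and the parameter $\tilde p_{C,k}=(C^2-1)s+p-k$ to record how the dilatation $C$ reweights the $s$ active coordinates against the remaining $p-k$ directions.
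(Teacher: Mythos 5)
Your opening reduction is the paper's: realize $G(n,p)$ via $Z(t)=\sum_i t_i g_i$ on $\mathds S^{m-1}$, observe that NSP fails iff the piecewise-linear process $X(t)$ is somewhere positive, and bound the failure probability by Rice-type counts of local maxima over a stratification of the sphere. The genuine gap is in how you treat the non-smooth set, and that is exactly where the content of the bound lies. You stratify by (support, sign) cells, which are \emph{full-dimensional} regions on which $X$ is linear, and you assert that ``the contribution of the boundaries (a lower-dimensional set) vanishes almost surely.'' This is false for the boundaries on which some coordinate $Z_j(t)$ vanishes: there $-|Z_j(\cdot)|$ has a \emph{concave} kink, and concave kinks are precisely where maxima like to sit, so these measure-zero strata carry local maxima with non-negligible probability. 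The paper's partition is therefore not by sign/support cells but by the random sub-spheres $\dot{\mathds S}_A=\{t:\ Z_A(t)=0,\ Z_j(t)\neq 0\ \forall j\notin A\}$ of dimension $m-1-|A|$; the index $k=|A|$ in \eqref{eq:BornMoche} counts vanishing coordinates and ${p \choose k}$ counts the choices of $A$, not sign patterns. A point of $\dot{\mathds S}_A$ is a local maximum on the whole sphere iff (i) it is a local maximum along $\mathds S_A$ --- which reproduces the smooth Kac--Rice computation in dimension $m-k$ with $p-k$ coordinates, using $X''(t)=-X(t)I$, the independence of $X'(t)$, and the variance $\tilde p_{C,k}$, and yields the Gamma ratio, the power of $C^2 s/\tilde p_{C,k}$, and the factor $\psi_{p-k}(C)$ --- \emph{and} (ii) the super-gradient of $X$ along $V_A=\mathrm{Span}\{g^j:j\in A\}$ contains zero. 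Condition (ii) is the event that an independent isotropic Gaussian of variance $\tilde p_{C,k}$ lies in the zonotope $\sum_{i\in A}[-g^i,g^i]$, which is what $\mathbf Q(k,\tilde p_{C,k},m)$ of Lemma \ref{lem:Zonotope} bounds; it is a first-order optimality condition at a kink, not a ``sign-pattern count.'' Dropping the boundary strata as negligible would leave you with only the $k=0$ term of \eqref{eq:BornMoche}.

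Two secondary corrections. The boundaries where the top-$s$ support changes (the tie set $\mathcal W$) do carry no local maxima, but this needs the paper's argument that the relevant piece $\mathrm{Max}_h$ is convex with a direction of strict convexity, not a measure-zero remark. And $\psi_{p-k}(C)$ is not ``the one-dimensional Rice density integral'': it is the pointwise bound on $\P\{X(t)>0\}$ from Lemma \ref{lem:2}, entering after the factorization $X(t)=\|Z(t)\|_2\,Y(t)$ and the bound $\E\bigl((Y(t)^+)^{m-1}\bigr)\leq (C\sqrt s)^{m-1}\P\{Y(t)>0\}$.
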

\ni

\begin{figure}[!h]
\begin{center}
\includegraphics[width=0.6\textwidth]{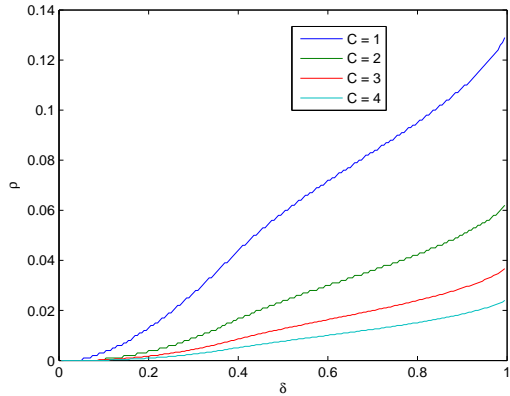}
\end{center}
\caption{Numerical computation of the lower bound $( \Pi \simeq 0)$ on the phase transition for $C=1,2,3,4$ (from bottom to top). An explicit expression can be found in Theorem \ref{thm:Main} . Simulations have been performed with $n=200.000$.}
\label{fig:Lambert2}
\end{figure}

\section{Proof of Theorem \ref{thm:Main}}
\subsection{Model and notation}
Let $0<s<n<p$, let $C>1$  and set $m=p-n$. Let $G(n,p)$ be uniformly distributed on the Grassmannian $\mathrm{Gr}_{m}(\R^p)$. Observe that it can be generated by $m$ independent standard Gaussian vectors $g_i\in\mathds R^{p}$ for $i=1,\ldots m$. Define $\{Z(t)\,;\ t \in  \mathds{S}^{m-1}\}$ the process with values in $\R^p$ given by:
 \begin{equation} 
  \notag
 Z(t) :=  \sum_{i=1}^m  t_i g_i\,.
 \end{equation}
Note this process spans $G(n,p)$ and it can be written as
\begin{equation} 
\notag
\mathrm{for}\ j=1,\ldots,p\,,\quad Z_j(t) =  \langle  t, g^j \rangle\,, 
\end{equation}
where $(g^j)_{j=1}^{p}$ are independent Gaussian random vectors with standard distribution in $\R^m$. Let $O_p$ and $O_m$ two orthogonal matrices of size, respectively, $(p\times p)$ and $(m \times m)$. Thanks to unitarily invariance of the Gaussian distribution, remark that:
\[
\forall t\in\mathds S^{m-1}\,,\quad O_p Z(t) O_m\sim Z(t)\,.
\]
Consider now the ordered statistics of the absolute values of the coordinates of $Z(t)$:
\[
|Z_{(1)} (t)| \geq  \cdots \geq |Z_{(p)} (t)|\,,
\]
where the ordering $((1),\dots,(p))$ is always uniquely defined if we adopt the convention of keeping the natural order in case of ties. Given a sparsity $s$, a degree of freedom $m$, and a degree of constraint $p$, consider the real valued process  $\{X(t)\,;\ t \in \mathds{S}^{m-1}\}$ such that:
\begin{equation}\label{f:x}
X(t) =C|Z_{(1)} (t)|  + \cdots + C|Z_{(s)} (t)|  -\big[ |Z_{(s+1)} (t)| + \cdots + |Z_{(p)} (t)| \big].
\end{equation}
NSP is equivalent to the fact that this process is always non positive.  We will prove that it happens with an overwhelming probability.

\subsection{Cutting the sphere out}
\label{sec:cut}
As we will see later, the process $X(.)$ is locally linear over some subsets and to take benefit of that, we need to consider a particular partition of the sphere.

Let $A \subseteq \{1,\ldots,p\}$, define the random subsets $\mathds S_A$ and $\dot{\mathds S}_A$ of  the unit sphere $\mathds{S}^{m-1}$ by:
\begin{align*}
\mathds S_A &= \{ t \in \mathds{S}^{m-1}\,;\ \ Z_A(t)=0 \} \,,\\
 \dot{\mathds S}_A &= \{t \in \mathds{S}^{m-1}\,;\ \ Z_A(t)=0 \ \mathrm{and}\  \forall j \notin A,\  Z_j(t) \neq 0\},
\end{align*}
where we denote $Z_{A}(t)=(Z_{j}(t))_{j\in A}$. One can check that $\mathds S_A$ is the unit sphere of the orthogonal of $V_{A} := \mathrm{Span}\{g^j\,;\ j \in A\}$. This implies that a.s. $\mathds  S_A$ is a random sphere of dimension $ m-1 -|A|$  if  $|A|\leq m-1$ and is almost surely empty if  $|A| > m-1$. It follows that the quantities  $|Z_{(1)} (t)|$, ... ,$|Z_{(n+1)} (t)|  $  are a.s. positive and that a.s. 
\[
\mathds S^{m-1}=\bigcup_{\lvert A\lvert\leq m-1} \dot{\mathds S}_A\,,
\]
giving a partition of the sphere. We define also, for later use, the random subset $\mathcal W$ by:
\[
\mathcal W:= \{ t \in \mathds S^{m-1}\,;\ |Z_{(s)} (t) | =  |Z_{(s+1)} (t) |\}\,.
\]
Observe that, conditionally to $g^{j}$, the set $\mathcal W$ is closed with empty interior. 

\subsection{Probability of failure}\label{sec:Rephrase}
We consider the probability: 
\begin{equation}\label{e:pi}
\Pi = \P \big\{ \mathcal{ M} > 0\big\}\leq
 \sum_{\lvert A\lvert\leq m-1}    \P \big\{  \mathcal{ M}_{\dot{\mathds S}_A}  > 0\big\}\,,
\end{equation}
where $ \mathcal{ M}  $ and $\mathcal{ M}_{\dot{\mathds S}_A} $ are respectively the number of positive local maximum of $X(.)$ along  $\mathds{S}^{m-1}$ and 
$\dot{\mathds S}_A$. The baseline of our proof is to upper-bound each right hand side probabilities, using the expected number of positive local  maximum above zero and  Markov inequality. The first element is Lemma \ref{lem:2} proving that:
\[
  \forall t\in \dot{\mathds S}_A \,,\quad \P\{X(t)  > 0\} \leq \psi_{p-k}(C) \,,
\]
where $k := |A|$ and:
$$\psi_{p-k} (C) = {p-k \choose s} \left(\frac{C^2 4 s}{\pi}\right)^\frac{p-k-s}{2} \frac{\G((p-k)/2)}{\G(s/2) \G(p-k-s+1)},$$
where $\G$ denotes the Gamma function. The second element is that $X(t)$ admits a density $p_{X(t)}$. To check that, note that $|Z_{(1)}(t)|,\ldots |Z_{(p)}(t)|$  are the order statistics  of the absolute values  of i.i.d. Gaussian variables  and thus they have a known joint density  on the simplex $|Z_{(1)}|\geq \ldots \geq |Z_{(p)}|$. Formula \eqref{f:x} implies the existence of a density for $X(t)$. Moreover, this density does not depend on $t$ due to invariance of Gaussian distribution.

\subsection{Initialization: local maxima on $\dot{\mathds S}_{\emptyset}$ }
By considering the symmetry properties of the sphere $\dot{\mathds S}_{\emptyset}$, we  have:
\[
\P \big\{  \mathcal{ M}_{\dot{\mathds S}_{\emptyset}}  > 0\big\} \leq  \frac12\E( \mathcal{ M}_{\dot{\mathds S}_\emptyset} )\,.
\]
 In this part, our aim will be to give bound to the expectation using a Kac-Rice formula.  One can check that if $t$  belongs to $\dot{\mathds S}_\emptyset$ and  does not belong to $\mathcal W$, $X(.)$ is locally the sum of the absolute values of some $s$  coordinates multiplied by $C$ minus the sum of the absolute values of the other coordinates.
It can be written as:
\[
  X(u) = C \varepsilon_1 Z_{(1)} (u) + \cdots + C \varepsilon_s Z_{(s )} (u)+\varepsilon_{s+1} Z_{(s+1)} (u)+\cdots + \varepsilon_p Z_{(p)} (u),
\]
  where $\varepsilon_1,...,\varepsilon_p$ are random variables taking values $\pm1$. 
  
 \begin{lemma} 
\label{lem:IndependentTangentSpace}
Let $t\in\mathds S^{m-1}$ then, almost surely, it holds $t \in \dot{\mathds S}_{\emptyset}$  and $ t\notin \mathcal W$. Furthermore, the spherical gradient $X'(t)$  and the spherical  Hessian  $X''(t)$ of $X(.)$  along $\mathds S^{m-1}$ at $t$  exist and: 
 \begin{itemize}
 \item $  X''(t)  = -X(t) I_{m-1}$.
 \item$\big( X(t), X''(t)\big) $ and $X'(t)$ are independent.
 \item $X'(t)$   has a Gaussian  centered isotropic  distribution onto $t^{\bot}$ with variance $ \tilde{p}_{C,0} = (s C^2 + (p-s))$ .
 \end{itemize}
 \end{lemma}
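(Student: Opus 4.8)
The plan is to fix a point $t\in\mathds S^{m-1}$ and to reason conditionally on, and then in law over, the Gaussian vectors $g^1,\dots,g^p$. The first observation is that, for fixed $t$ with $\lVert t\lVert=1$, the scalars $Z_j(t)=\langle t,g^j\rangle$ are i.i.d.\ standard Gaussian, so $\P(Z_j(t)=0)=0$ for each $j$ and a union bound gives $Z_j(t)\neq0$ for all $j$ almost surely, i.e.\ $t\in\dot{\mathds S}_\emptyset$; since the half-normal law is continuous, the $s$-th and $(s+1)$-th order statistics of $\{|Z_j(t)|\}$ almost surely differ, i.e.\ $t\notin\mathcal W$. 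On this almost-sure event both the top-$s$ index set and the signs $\varepsilon_j=\sgn(Z_j(t))$ are locally constant near $t$, so in a spherical neighbourhood of $t$ the process coincides with a single linear form
\[
X(u)=\langle u,a\rangle,\qquad a:=\sum_{j=1}^{p} c_j\,\varepsilon_{(j)}\,g^{(j)},\qquad c_j=\begin{cases}C,& j\le s,\\ -1,& j>s.\end{cases}
\]
In particular $X$ is smooth near $t$, which yields the existence of $X'(t)$ and $X''(t)$.

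Next I would read off the two spherical derivatives from this local linear representation, which is legitimate since gradient and Hessian are local objects. The spherical gradient is the tangential projection $X'(t)=P_t\,a$, with $P_t=I_m-t\,t^{\top}$. For the Hessian, a linear form has vanishing Euclidean Hessian, so only the curvature of the sphere contributes: differentiating $\theta\mapsto\langle a,\cos\theta\,t+\sin\theta\,u\rangle=\cos\theta\,\langle a,t\rangle+\sin\theta\,\langle a,u\rangle$ twice at $\theta=0$ along the geodesic in a unit tangent direction $u$ gives $-\langle a,t\rangle$. Since $\langle a,t\rangle=\sum_j c_j\varepsilon_{(j)}Z_{(j)}(t)=\sum_j c_j|Z_{(j)}(t)|=X(t)$, polarising in $u$ delivers the first bullet, $X''(t)=-X(t)\,I_{m-1}$.

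The heart of the argument, and the step I expect to be the main obstacle, is the joint law of $X'(t)$ and the pair $\big(X(t),X''(t)\big)$, because the coefficients $\varepsilon_{(j)}$ entering $a$ are themselves random functions of the $g^j$ and so $a$ is \emph{not} simply Gaussian. The idea is to split each vector along and across $t$: write $g^j=\xi_j\,t+h_j$ with $\xi_j:=\langle t,g^j\rangle$ and $h_j:=P_t g^j$. Then the $\xi_j$ are i.i.d.\ standard normal, the $h_j$ are i.i.d.\ isotropic standard Gaussian on the $(m-1)$-dimensional space $t^{\bot}$, and the two families $\{\xi_j\}$ and $\{h_j\}$ are independent. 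Both the ordering $(\cdot)$ and the signs $\varepsilon_{(j)}$ are functions of $\{\xi_j\}$ alone, so collecting the terms of $a$ by original index gives $a=\sum_j d_j\,g^j$ with $\{\xi\}$-measurable coefficients $d_j\in\{\pm C,\pm1\}$ satisfying $\sum_j d_j^2=\sum_j c_j^2$. Splitting yields $\langle a,t\rangle=\sum_j d_j\xi_j=X(t)$ and $P_t a=\sum_j d_j h_j$, so conditionally on $\{\xi_j\}$ the vector $P_t a$ is a centred Gaussian on $t^{\bot}$ with covariance $\big(\sum_j d_j^2\big)I=\big(\sum_j c_j^2\big)I$; as $\sum_j c_j^2$ is deterministic, this conditional law is free of $\{\xi_j\}$.

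It then follows that $X'(t)=P_t a$ is independent of $\{\xi_j\}$, hence independent of $\big(X(t),X''(t)\big)$, which is a function of $\{\xi_j\}$ alone, giving the second bullet; and since $\sum_j c_j^2=sC^2+(p-s)=\tilde p_{C,0}$, the vector $X'(t)$ has the announced centred isotropic Gaussian law on $t^{\bot}$ with variance $\tilde p_{C,0}$, the third bullet. The only delicate point is the measurability bookkeeping guaranteeing that the sign/order coefficients depend on the along-$t$ components $\{\xi_j\}$ only and therefore decouple from the orthogonal components $\{h_j\}$; once that independence is secured, the Gaussian computation is immediate.
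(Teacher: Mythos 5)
Your proposal is correct and follows essentially the same route as the paper: you establish the almost-sure local linearity of $X$ near $t$, obtain the spherical Hessian from the vanishing Euclidean Hessian plus the curvature term $-\langle a,t\rangle I_{m-1}$, and prove the independence and the isotropic Gaussian law of $X'(t)$ by decomposing each $g^j$ into its component along $t$ (which determines the signs, the ordering, $X(t)$ and $X''(t)$) and its component in $t^{\bot}$ (which determines $X'(t)$), exactly as in the paper's conditioning argument. The only cosmetic difference is that you compute the spherical Hessian along geodesics where the paper uses a projection chart; both are routine and equivalent.
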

\begin{proof}

 The fact that, with probability 1, $t \in \dot{\mathds S}_{\emptyset}$  and $ t\notin \mathcal W$  implies that the process $X(.)$  is locally  given by 
\[
X(u) = C \varepsilon_1 Z_{(1)} (u) + \cdots + C \varepsilon_s Z_{(s )} (u)+\varepsilon_{s+1} Z_{(s+1)} (u)+\cdots + \varepsilon_p Z_{(p)} (u),
\]
  where the  signs  $(\varepsilon_1,...,\varepsilon_p)$ and the ordering $ (1), \ldots,(p)$  are those of $t$.  The process $X(.)$ 
 is  locally linear and thus  differentiable around $t$ 
 and  its gradient in $\R^m$ at $t$, denoted  $\dot{X}(t)$, is given by 
\[
C \varepsilon_1 g^{(1)} + \cdots + C \varepsilon_s g^{(s )}+\varepsilon_{s+1} g^{(s+1)}+\cdots + \varepsilon_p g^{(p)}.
\]
 Moreover, note that its Hessian on $\R^m$ vanishes. 
 
Let us consider now the spherical gradient $X'(t)$ and the spherical Hessian $X''(t)$.
It is well known that 
$
X'(t) = P_{t^{ \bot}} \dot{X}(t),
$ where  $P_{t^{ \bot}}$ is  the orthogonal projection onto the orthogonal of $t$.
As for the spherical	Hessian, it is defined on the tangent space $ t^{ \bot}$ and is equal to  the projection of the  Hessian  in $\R^m$, which vanishes, minus  the product  of the normal derivative by   the identity matrix.  This is detailed in Lemma \ref{spherical}. 
 In the case of  the unit sphere,  the vector normal  to the sphere at $t$ is $t$ itself  and 
 $$
  X''(t)  =  - \langle  \dot{X} (t),t \rangle I_{m-1} = -  X(t) I_{m-1}.
 $$
In the case of $X'(t)$, remark that $Z(t)$ and thus $ X(t)$, $(\varepsilon_1,\ldots,\varepsilon_p,(1),\ldots,(p))$  are functions  of  $(P_{t}(g^{1}),\ldots,P_{t}(g^{p}))=(Z_{1}(t)t,\ldots,Z_{p}(t)t)$ (with obvious notation). They are therefore independent of $X'(t)$ which is a function  of  $(P_{t ^\bot}(g^{1}),\ldots,P_{t^\bot}(g^{p}))$. Conditionally  to $ (\varepsilon_1,\ldots,\varepsilon_p,(1),\ldots,(p))$, $X'(t) $ can be written as 
  \[
 X'(t)= (C \varepsilon_1P_{t^{\bot}} g^{(1)} + \cdots + C \varepsilon_s P_{t^{\bot}}g^{(s )}+\varepsilon_{s+1}P_{t^{\bot}} g^{(s+1)}+\cdots + \varepsilon_p P_{t^{\bot}}g^{(p)})\,,
\]
which implies that the conditional distribution of $X'(t)$ is  Gaussian  with variance-covariance matrix  $(s C^2 + (p-s))\mathrm{Id}_{t^{\bot}}$, where $\mathrm{Id}_{t^{\bot}}$ is the identity operator on $t^{\bot}$. Since $X'(t)$ is independent of $ (\varepsilon_1,\ldots,\varepsilon_p,(1),\ldots,(p))$ this conditional distribution is in fact  equal to the unconditional distribution. 
  \end{proof}
\ni
The next step is to prove that a.s. there is no local maximum on $\mathcal W$. The case where there are tied  among the $|Z_i(t)|$  has to be considered (though it happens with probability $0$ for a fixed $t$). Note that the order statistics and the ordering remain uniquely defined because of our convention. 

\ni
 Suppose that $t\in \mathcal W$. Since  all the possible ordering $((1) ,\ldots,(p))$ and signs $(\varepsilon_1,\ldots,\varepsilon_p)$ play the same role by unitarily invariance of the distribution of $Z(t)$  for all $t$, we make the proof in the particular case where  $((1) ,\ldots,(p))$ is the identity  and all the signs  $(\varepsilon_1,\ldots,\varepsilon_p)$ are positive:
 \[ Z_1(t) \geq ...\geq  Z_{s-h-1}(t) > Z_{s-h}(t)= \ldots 
 = Z_{s+k} (t)>Z_{s+k+1}(t) \geq ..\geq Z_p(t) >0.
 \]
  Then,  for $w$ in some neighborhood  $N$ of $t$ (not included in $ \mathcal W$), we have:
  \begin{multline} 
  \notag
     X(w)  =  C Z_1 (w)+\cdots + CZ_{s-h-1} (w) + ( 1+C)  \mathrm{Max}_h  \big(Z_{s-h}(w) + \cdots +  Z_{s+k}(w) \big)  \\   - (  Z_{s-h}(w) + \cdots +  Z_{p}(w) ),
  \end{multline}
where $\mathrm{Max}_h$ is the sum of the $h$ largest element of its $(h+k +1)$ arguments. As being the maximum   of $ h \choose {(s+k)+1} $  linear forms the function $\mathrm{Max}_h$  is convex.
   
\ni
Let us consider in detail the vectors $g^{s-h}, \ldots, g^{s+k}$. With probability $1$, they are pairwise different.  The point $t$ is chosen  such that their projection  on $t $ coincide.  As a consequence the derivatives of the linear forms $ Z_{\ell}(w) = \langle g^{\ell }, w\rangle ,\ \ell = (s-h) \ldots (s+k)$ on the tangent space $t^\bot$ are pairwise different. This  implies that the function $\mathrm{Max}_h$ has some direction in which it is strictly  convex  and as a consequence $t$ cannot be a local maximum.

\ni
Suppose  that $t\notin W $, and suppose that we limit our attention to  points $t$ such that 
 $X(t)>0$,  then Lemma \ref{lem:IndependentTangentSpace}  implies that $X''(t)$  cannot be singular. 
 
This last condition implies that we can apply Theorem 5.1.1 of \cite{adler1981geometry}. This lemma is a Kac type formula that shows that  the   zeros of the derivative $X'(t)$ are isolated  an thus in finite number. In addition recalling that  $\mathcal{M}_{\dot{\mathds S}_\emptyset}$   is   the number of positive local maximum of $X(.)$ and belonging  to $\dot{\mathds S}_\emptyset$, this number satisfies 
\eq
\notag
  \mathcal{M}(\dot{\mathds S}_\emptyset) = \lim_{\delta \to 0 } \frac {1}{V(\delta)} 
  \int_{\mathds S^{m-1}} \E(|\det X''(t) |\UN_{|X'(t)-0|<\delta}  \UN_{ t \in\dot{\mathds S}_\emptyset} \UN_{X(t) >0}) \sigma(\d t),
\qe
where $\sigma$ is the surfacic measure  on $\mathds S^{m-1}$ and $V(\delta)$ is the volume of the ball $B(\delta)$ with radius $\delta$. Passing to the limit using the Fatou lemma  gives:
   \begin{align}
   \notag
  \E (\mathcal{M}(\dot{\mathds S}_\emptyset)) & \leq \liminf_{\delta \to 0} \int_0^\infty \d x \int_{\dot{\mathds S}_\emptyset}\d t\, p_{X(t)} (x)    \\ \notag
  &\frac {1}{V(\delta)} \int _{B(\delta)}  \d x' p_{X'(t)}(x')  \E\big( |\det(X''(t)) |\ \Big|   X(t)=x,X'(t)=x') \\\notag
&\leq (2\pi \tilde{p}_{C,0}\big) ^{\frac{1-m}2} 2 \frac{ \pi^{\frac m2}}{\G(\frac m2)} \int_0^\infty x^{m-1}    p_{X(t)} (x)dx  \,,
\end{align}
where  $ p_{X(t)}(x)$ denotes  the density of $X(t) $ at $x$ and $\G$ denotes the Gamma function. Note that we have used:
 \begin{itemize}
 \item  the fact that every point $t$ is equivalent so we can replace the integral on the unit  sphere by  the volume of the unit sphere $2{ \pi^{\frac m2}}/{\G(\frac m2)}$ and the value at a given point, 
 \item $\E\big( |\det(X''(t)) |\ \lvert  X(t)=x,X'(t)=x')= x^{m-1} $,
 \item  the Gaussian density $p_{X'(t)}(x')$ is bounded by $ (2\pi \tilde{p}_{C,0}\big) ^{\frac{1-m}2} $.

 \end{itemize} \medskip
So it remains to bound $\E [(X(t)^+)^{m-1}]$. For that purpose  we write $X(t)$ as the independent product $\|Z(t)\|_2 Y(t)$, where  the process $Y(t)$ is constructed exactly as the process $X(t)$ but starting now from a uniform distribution $U$  on the unit sphere $\mathds{S} ^{p-1}$ instead of the standard Gaussian distribution of $Z(t)$. Using standard results on the moments  of the $\chi^2$ distribution we have:
\[
 \E ((X(t)^+)^{m-1}) = 2^{\frac {m-1}2} \frac{\G( \frac{m-1+p}{2})}{\G(\frac p2)}   \E ((Y(t)^+)^{m-1}) \,.
\]
We use now the fact that   $Y(t) \leq C \sqrt{s}$ to get that:
\[
\E ((Y(t)^+)^{m-1})\leq (C \sqrt{s})^{m-1}\P \{Y(t) >0\}\,.
\]
Moreover, Lemma \ref{lem:2} shows that, with probability  greater than  $1 -\psi_p (C) $,  a standard Gaussian vector $g$ in $\R^p$ enjoys:
\[
 C \|g_S\|_1  \leq \|g_{S^c}\|_1\,.
\]
 This implies that:
 \eq
 \label{eq:BoundPsi}
 \P \{Y(t) >0\} \leq  \psi_p(C)\,,
 \qe
 and consequently the probability of having a local maximum above 0  on $\dot{\mathds S}_\emptyset$ is bounded by:
\eq 
\label{eq:BorneH} 
\sigma (\dot{\mathds S}_\emptyset)\,(2\pi \tilde{p}_0\big) ^{\frac{1-m}2}  2^{\frac{m-1}2} \frac{\G( \frac{m-1+p}{2})    }{\G(\frac p2)}  (C \sqrt{s})^ {m-1}    \psi_p(C)  \leq 2 \sqrt \pi \Big(\frac{C^2 s}{\tilde{p}_{C,0}}\Big) ^{\frac{ m-1}{ 2} } \frac{\G( \frac{m-1+p}{2})    }{\G(\frac p2)\G(\frac m2)}    \psi_p(C) 
\qe
Denote the right hand side of this last inequality by $h_C(s,m,p)$. 

\subsection{Maximum on smaller spheres} 
Let us now consider the case of a maximum on $\dot{\mathds S}_A$, $A \neq \emptyset $. A point $t \in \dot{ \mathds S}_A \backslash \mathcal{ W}$  is a local maximum on $\mathds{S}^{m-1}$ if it satisfies the following conditions:
 \begin{itemize}
 \item it is a local maximum along  $\mathds S_A$, \label{condi:1}
 \item its super-gradient along the orthogonal space $V_A$ contains zero, 
 \end{itemize}  
where the super-gradient is defined as the opposite of the sub-gradient. One can easily check that the two conditions are independent. Indeed, recall that $k= |A|$ and $V_A =  \mathrm{Span}\{g^i;\ i\in A\}$ (see Section \ref{sec:cut}) and consider the process $X(.)$ conditionally to $V_A$. In that case, $\mathds S_A$ becomes a deterministic sphere of dimension $m-k-1$. Moreover, note that the behavior  of $X(.)$ on $ \mathds S_A$ depends only on the $g^j, j\ \notin A$ and that for such $j$, 
\[
     Z_j(t) =   \langle g^{j},t \rangle   =    \langle \Pi_{V_A^{\bot}}g^{j},t \rangle,
\]  
so, conditionaly to $V_A$, the distribution of $X(t)$ corresponds to the case $\mathds S_\emptyset$ in the space of dimension $m-k$ instead of $m$ and with $p-k$ vectors. In conclusion, the first condition leads to the same computations as the case $\mathds{S}_\emptyset$ and is bounded by
 
 \begin{align} \notag
     h_C(s,m-k,p-k) = &2\sqrt \pi \Big(\frac   {C^{2}s}{\tilde{p}_{C,k}}\Big) ^{\frac{ p-n-1-k}{ 2} } \frac{\G( \frac{2p-2k-n-1}{2})    }{\G(\frac{p-k}2)\G(\frac{p-n-k}2)} \psi_{p-k}(C)\,.\notag
 \end{align} 
Let us look to the second one which depends only on the $g^j, j \in A$. Thus we have to compute  the  probability  of the super-gradient to contain zero. Indeed, locally around $t$,  the behavior of  $X(w) $  along  $V_A$ is the sum of  some linear forms (for $ \ j \notin A$) and of  absolute value of linear forms (for $ \ j \in A$)  thus it is locally concave and  we can define its super-gradient. More precisely, for $w$ in a  neighborhood of $ t\in \dot{\mathds S}_{A}\setminus \mathcal W$,    
\[
X(w)  = X_A(w) + X_{A^c}(w)\,,
\]
where,  because $k \leq p-s$:
\[
    X_A(w) =  - \sum_{i \in A} |Z_i (w)| \,.
\]
Around $t$, $X_{A^c}(w)  $ is differentiable and, with a possible harmless change of sign (see Lemma \ref{lem:IndependentTangentSpace}), its gradient is given by:
\[
\sum_{i \in A^c}  C_i g^i\,,
\]
where the coefficient $  C_i $  takes the value $C$ for $s$ of them and $-1$ for the others.  This gradient is distributed as an isotropic normal variable $\xi \in V_A $ with variance:
\[
\tilde{p}_{C,k}=(C^2-1) s +p-k\,.
\]
By this we mean that the distribution of  $\xi$, in a convenient basis, is $\mathcal N(0,  \tilde{p} _{C,k} I_k)$.
\ni
Let us now consider the case $i \in A$. Observe that the super-gradient along $V_A$ of the concave function $-|Z_i(t)|$ at point $t$ is the segment $[-g^i,g^i]$ and thus the super-gradient of $X_A(t)$ is  the zonotope:
  \begin{equation}\label{f:zozo}
     Zo = \sum_{i \in A} [-g^i,g^i] ,  
      \end{equation}
   where the sum denotes the Minkowsky addition. Recall that the distribution of $X(t)$ does not depend on $t$.
   
   \medskip
   In conclusion, the probability of the super-gradient to contain zero is equal to $ \mathbf{P}(k,\tilde{p}_{C,k},m )$  
the probability  of the following event:
     \begin{itemize} 
      \item draw $k$ standard Gaussian  variables   $ g^1,\ldots,g^k$  in $\R^m$  and consider  the zonotope  $Zo$ given by formula  \eqref{f:zozo},
      \item  draw in the space  $V_A$ generated by $g^1,\ldots, g^k$ 
         an  independent isotropic normal variable $\xi$  of variance $ \tilde{p}_{C,k}$,
      \item  define  $\mathbf{P}(k,\tilde{p}_{C,k},m)$ as the probability of $\xi$ to be  in $Zo$.
\end{itemize}
\begin{lemma} \label{lem:Zonotope}
 Define the  orthonormal basis $e^1,\ldots,e^k$ obtained by Gram-Schmidt  orthogonalization of the vectors $g^1,\ldots g^k$.  Then:
 \begin{enumerate}
 \item[$\mathrm{(a)}$]
$\mathbf{P}(k,\tilde{p}_{C,k},m)$ is less than the probability   $ \mathbf{Q}(k,\tilde{p}_{C,k},m) $ of $\xi$ to be  in the  hyper-rectangle:
  \begin{equation}
  \notag
     R = \sum_{i \in A} [- \langle e^i, g^i\rangle e^i,\langle e^i, g^i\rangle e^i] ,  
      \end{equation}
\item[$\mathrm{(b)}$]
this last probability satisfies:
      $$
      \big(\mathbf{Q}(k,\tilde{p}_{C,k},m)  \big)^2\leq 
  \left(\frac{2}{\pi \tilde{p}_{C,k}}\right)^{H_k+k-m}\frac{ H_k!}{(m-k)!},
   $$
   with $ H_k =\lfloor (\frac\pi2  \tilde{p}_{C,k})\wedge m\rfloor$, where $\lfloor.\rfloor$ is the integer part.
 \end{enumerate}
 \end{lemma}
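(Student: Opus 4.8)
The plan is to handle both assertions through the QR (Gram--Schmidt) factorization of the matrix with columns $g^1,\ldots,g^k$. For part (a), observe first that with exactly $k$ generators in the $k$-dimensional space $V_A$, the zonotope $Zo$ is the parallelepiped $M[-1,1]^k$, where $M$ is the matrix of the $g^i$ in the orthonormal basis $e^1,\ldots,e^k$. Factoring $M=QU$ with $Q$ orthogonal and $U$ upper-triangular (so $U_{ii}=\langle e^i,g^i\rangle$) and using that $\xi$ is isotropic on $V_A$, so that $Q^{\top}\xi\sim\xi$, I reduce the claim, conditionally on the $g^i$, to
\[
\P\{\xi\in U[-1,1]^k\}\leq\P\{\xi\in D[-1,1]^k\},\qquad D:=\mathrm{diag}(U_{11},\ldots,U_{kk}),
\]
since $R$ is precisely $D[-1,1]^k$ in the basis $e^1,\ldots,e^k$. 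It is worth stressing that $Zo$ and $R$ have the same Lebesgue volume $2^k\prod_i U_{ii}$, so the inequality cannot come from a set inclusion and is genuinely probabilistic. I would establish it by integrating the coordinates of $\xi$ one at a time. Because $U$ is upper-triangular, the constraint carried by the $i$-th row restricts the $i$-th coordinate to an interval of fixed length $2U_{ii}$ whose centre depends only on the later coordinates; integrating that coordinate against the symmetric unimodal law $\mathcal N(0,\tilde p_{C,k})$ and using that a window of fixed width is heaviest when centred at the mode bounds its contribution by $\P\{|\mathcal N(0,\tilde p_{C,k})|\leq U_{ii}\}$. The surviving constraints are exactly the analogous problem for the trailing block of $U$, so an induction gives $\P\{\xi\in U[-1,1]^k\}\leq\prod_{i=1}^k\P\{|\mathcal N(0,\tilde p_{C,k})|\leq U_{ii}\}=\P\{\xi\in D[-1,1]^k\}$; taking expectation over the $g^i$ yields $\mathbf P\leq\mathbf Q$.

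For part (b), I keep the conditioning and write $\mathbf Q=\E\big[\prod_{i=1}^k\P\{|\mathcal N(0,\tilde p_{C,k})|\leq\langle e^i,g^i\rangle\mid g\}\big]$. Bounding the Gaussian density by its value at the origin gives $\P\{|\mathcal N(0,\tilde p_{C,k})|\leq a\}\leq\min\big(1,a\sqrt{2/(\pi\tilde p_{C,k})}\big)$. The key structural input is the Bartlett decomposition: the diagonal entries $\langle e^i,g^i\rangle$ are mutually independent with $\langle e^i,g^i\rangle^2\sim\chi^2_{m-i+1}$. The expectation therefore factorizes, and two uses of Jensen's inequality (concavity of $x\mapsto\min(1,cx)$ and of $\sqrt{\cdot}$) bound the $i$-th factor by $\min\big(1,\sqrt{2(m-i+1)/(\pi\tilde p_{C,k})}\big)$. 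A factor takes the nontrivial branch precisely when $m-i+1\leq\tfrac\pi2\tilde p_{C,k}$, that is for the $H_k+k-m$ smallest values of $m-i+1$, with $H_k=\lfloor(\tfrac\pi2\tilde p_{C,k})\wedge m\rfloor$; their product telescopes to $(2/(\pi\tilde p_{C,k}))^{(H_k+k-m)/2}\sqrt{H_k!/(m-k)!}$, and squaring gives the stated bound.

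The main obstacle is the probabilistic comparison in part (a): since $Zo\not\subseteq R$ yet the two have equal volume, no monotonicity by inclusion is available, and the peeling argument must be arranged so that at each step the interval constraint on the integrated coordinate has width independent of the remaining variables. The upper-triangular form of $U$ is exactly what ensures this, which is why passing to the Gram--Schmidt basis is indispensable rather than cosmetic. A secondary point needing care is the mutual independence and chi-square law of the QR diagonals underpinning the factorization in part (b), for which I would appeal to the Bartlett decomposition.
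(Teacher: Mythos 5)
Your proof is correct and follows essentially the same route as the paper: Gram--Schmidt triangularization together with a coordinate-by-coordinate Fubini/Anderson-type comparison for (a), and the mutual independence and $\chi(m),\ldots,\chi(m-k+1)$ laws of the diagonal entries (Bartlett decomposition) combined with a density bound for (b). The only cosmetic differences are that you use the one-dimensional symmetric-unimodal window argument where the paper invokes Anderson's inequality on the $(k-1)$-dimensional slices, and you bound the Gaussian probability first and then apply Jensen, where the paper rewrites each factor as a Student probability and bounds the Student density by $(2\pi)^{-1/2}$; both yield the identical final bound.
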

      
\begin{figure}
\center
\begin{tikzpicture}[scale=4]
\draw[black,dashed,->] (0.2,-0.45)--(0.2,0.45);
\draw[black,dashed,->] (0,0) -- (2,0) ;
\draw[RoyalBlue,very thick] (0.75,0.4) -- (1.75,0.4) -- (1.25,-0.4) -- (0.25,-0.4) --(0.75,0.4);
\draw[Bittersweet,very thick] (0.5,0.4) -- (1.5,0.4)-- (1.5,-0.4)-- (0.5,-0.4) -- (0.5,0.4) -- (0.5,0.4) ;
\draw[OliveGreen,very thick,|-|] (0.5,0)--(1.5,0);
\node[text=OliveGreen,font=\Large] at (1,-0.1) {$\tilde{Zo}_{k-1}$};
\node[text=black,,font=\large] at (1.8,-0.1) {$ z_1,\ldots,z_{k-1}$};
\node[text=black,,font=\large] at (0.1,0.4) {$ z_k$};
\end{tikzpicture}
\label{fig:Zozolezono}
\caption{The standard Gaussian measure of the zonotope (in blue) is smaller than that of the rectangle (in red) with basis $\tilde{Zo}_{k-1}$ (in green).}
\end{figure}
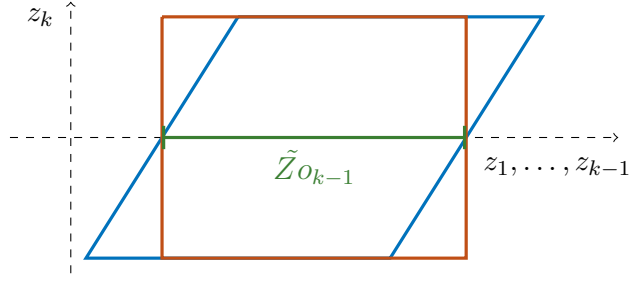
       
       \begin{proof}
       
      (a)  We prove the result  conditionally to the $g^i$'s and by induction on $k$. When $k=1$ the result is trivial since the zonotope and the rectangle  are simply the same segment. 
      
\ni     
Let $\varphi_h $ be  the standard Gaussian distribution on $\R^h$,  $\mathbf{P}(k,\tilde{p}_{C,k},m)$ is equal to:
\[
\varphi_k \big(  (\tilde{p}_{C,k})^{-1/2}. Zo\big) =: \varphi_k\big(\widetilde{Zo}\big).
\]
Via Gram-Schmidt ortogonalisation at step $k$, we can compute this probability using the Fubini theorem:
\[
 \mathbf{P}(k,\tilde{p}_{C,k},m)=  \int _{-\frac{\langle e^k,g^k\rangle}{\sqrt{\tilde{p}_{C,k}}}}
  ^{\frac{\langle e^k,g^k\rangle}{\sqrt{\tilde{p}_{C,k}} }  }  \varphi_{k-1}\big( \widetilde{Zo}_{k-1}  + v z \big)  \varphi(z)dz,
\]
where $\varphi$ is the standard Gaussian density on $\mathds R$, $\widetilde{Zo}_{k-1}$ is the zonotope  generated by $g^1,\ldots,g^{k-1}$  and normalized by $ (\tilde{p}_{C,k})^{-1/2}$ and  $v$ is some vector in $\mathds R^m$. By use of the Anderson inequality \cite{anderson1955integral}, the non-centered zonotope $\big( \widetilde{Zo}_{k-1}  + vz \big)$ has a smaller  standard Gaussian measure  than the centered one so
\begin{align*}
\mathbf{P}(k,\tilde{p}_{C,k},m)\ &\leq  
\int _{-\frac{\langle e^k,g^k\rangle}{\sqrt{\tilde{p}_{C,k}}}}
  ^{\frac{\langle e^k,g^k\rangle}{\sqrt{\tilde{p}_{C,k}} }  } 
    \varphi_{k-1}\big( \widetilde{Zo}_{k-1}   \big) \varphi(z) dz \\
&\leq  \int_{R}  \varphi(z_ 1) \ldots \varphi(z_k)  dz_1\ldots dz_k =: \mathbf{Q}(k,\tilde{p}_{C,k},m).
\end{align*}
The last  inequality  is due to the induction hypothesis. It achieves the proof. 

 (b) We  use  the relation above  and  deconditioning  on the $g^i$. Note the dimension of the  edges of the rectangle $R$ are independent with distribution: 
 \[
 2\chi(m), 2\chi(m-1),\ldots, 2\chi( m-k+1)\,,
 \] 
 where  the law $\chi(d)$ is defined as the square root of a $\chi^2(d)$. As a consequence, using the independence of  the components of $\xi$ in the basis $e^1,\ldots,e^k$ and the fact that a Student density $T$ is uniformly bounded  by $(2 \pi ) ^{-1/2}$, we get that:
\begin{align*}
 \mathbf{Q}(k,\tilde{p}_{C,k},m) =  \mathds{P}(\xi \in R) & = \prod_ {\ell=0}^{k-1} \P\Big[ | T (m-\ell)|  \leq \sqrt{\frac{m-\ell  }{\tilde{ p }_{C,k}}} \Big] \\
 &= \prod_ {\ell=m-k+1}^{m} \P\Big[ | T (\ell)|  \leq \sqrt{\frac{\ell  }{\tilde{ p }_{C,k}}} \Big]\,.
\end{align*}
  Suppose  that $ \pi \tilde{ p }_{C,k}  \geq 2m$, then a convenient bound is obtained by using the fact that a Student density  is uniformly bounded  by $(2 \pi ) ^{-1/2}$: 
\[
 \big( \mathbf{Q}(k,\tilde{ p }_{C,k},m) \big)^2\leq 
  \left( \frac{2}{\pi\tilde{ p }_{C,k}} \right)^{k}\frac{ m!}{(m-k)!}.
\]
In the other  case, set $ H_k = \lfloor(\pi  \tilde{ p }_{C,k})/2\rfloor$, where $\lfloor.\rfloor$ is the integer part. Observe that $H_{k}> m-k+1$ for $k\geq1$ to remove  factors that are greater than 1 in the computation and obtain
\[
 \big( \mathbf{Q}(k,\tilde{ p }_{C,k},m) \big)^2\leq 
  \left( \frac{2}{\pi\tilde{ p }_{C,k}} \right)^{H_k+k-m}
  \frac{ H_k!}{(m-k)!}\,,
\]
which conclude the proof. 
\end{proof} 
\ni
Eventually, summing up  over  the ${p \choose k}$ sets of size $k$, we get Theorem \ref{thm:Main2}. 

\section{Influence of smaller spheres}
\subsection{General bound on the sum}
In this part, we simplify the general bound of Theorem \ref{thm:Main} to derive a simpler one, exponentially decresing in $n$, as presented in Theorem \ref{thm:BorneR}. Considering \eqref{eq:BornMoche}, we have:
\eq
\Pi \leq 
\sqrt{\pi} \sum_{k=1}^{m} B_k(s,n,p)
\qe
where:
\begin{align*} B_k(s,n,p) = {p \choose n+k} {n+k \choose s} & \left(\frac{C^2 s}{\tilde{p}_{m-k}}\right)^\frac{k+1}{2} \frac{\G(\frac{n}{2} + k-\frac12)}{\G(\frac{n+k}{2}) \G(\frac{k}{2})} \\ & \times \left(\frac{C^2 4 s}{\pi}\right)^{\frac{n+k-s}{2}}  \frac{\G(\frac{n+k}{2})\sqrt{H_{m-k}!}}{\G(\frac{s}{2}) (n+k-s)!\sqrt{k!}} \left( \frac{2}{\pi \tilde{p}_{C,m-k}}\right)^\frac{H_{m-k} - k}2
\end{align*}
In order to derive a lower bound (the aforementioned bound goes exponentially fast towards zero), we limit our attention to the case described by
\begin{itemize}
\item (H1) $\rho \leq 1/2$,
\item (H2) $\frac{1}{\delta} \leq 1 + \pi/2(1 + \rho (C^2 - 1))$.
\end{itemize}
Observe that (H1) is not a restriction since we know that NSP does not hold for $\rho\geq0.2$.
Under (H2), note that $\forall k, ~H_k = m$, and hence
\begin{align*}
\Pi &\leq \mathbf R(s,n,p) \frac{p!}{s! \sqrt{m!}\G(s/2)} \left(\frac{4 C^2 s}{\pi}\right)^\frac{n-s}{2}\sum_{i=1}^{m}{m \choose k} \frac{(2 C^2 s)^k\G(\frac{n}{2} + k)}{(n-s+k)!^2} \left( \frac{2}{\pi \tilde{p}_{C,m-k}}\right)^\frac{m}2 \\
&\leq \mathbf R(s,n,p) \frac{p!}{s! (n-s)! \sqrt{m!}\G(s/2)} \left(\frac{4 C^2 s}{\pi}\right)^\frac{n-s}{2}\\&
\quad\quad\quad\quad\quad\times\sum_{i=1}^{m}{m \choose k} \frac{(2 C^2 s)^k\G(\frac{n}{2} + k)}{(n-s)^k(n-s+k)!} \left( \frac{2}{\pi \tilde{p}_{C,m-k}}\right)^\frac{m}2,
\end{align*}
where $\mathbf R(s,n,p)$ is a polynomial term in $(s,n,p)$. 
Consider now the quantity 
\[\alpha(k) := \frac{\G(\frac{n}{2}+k)}{(n-s+k)!}\]
which is a decreasing function of $k$ under assumption $(H1)$ and the fact that $\tilde{p}_{C,k}$ is an increasing function of $k$, then we obtain
\[\Pi \leq \mathbf R(s,n,p) \frac{p! \G(n/2)}{s! (n-s)!^2 \sqrt{m!}\G(s/2)} \left(\frac{4 C^2 s}{\pi}\right)^\frac{n-s}{2}\sum_{i=1}^{m}{m \choose k} \frac{(2 C^2 s)^k}{(n-s)^k} \left( \frac{2}{\pi \tilde{p}_{C,m-1}}\right)^\frac{m}2 .\]
At last, using Stirling Formula (see Lemma \ref{lem:AsymptoticGamma}), it yields
\[ \Pi \leq \mathbf R(s,n,p) \frac{p! \G(n/2)}{s! (n-s)!^2 \G(s/2)} \left(\frac{4 C^2 s}{\pi}\right)^\frac{n-s}{2} \left(\frac{2 e}{\pi (n + (C^2-1)s) m}\right)^\frac{m}{2} \left(1 + \frac{2 C^2 s}{n-s}\right)^m.
\]
Gathering the piece, one has: 
\begin{align*} \Pi \leq \mathbf R(s,n,p) \left(\sqrt{\frac{\pi}{2 e C^2}} \frac{(n-s)^2}{s^2}\right)^s &\left(C e \frac{\sqrt{n s m (n + (C^2-1)s)}}{(n-s)(n+(2C^2-1)s)}\right)^n \\ & \times \left(\sqrt{\frac{2}{e \pi}} \frac{p (n+(2C^2-1)s)}{(n-s)\sqrt{m(n+(C^2-1)s)}}\right)^p \,,
\end{align*}
which gives the result of Theorem \ref{thm:BorneR}.
\begin{remark}
The upper bound on $\alpha(k)$ and the lower bound on $\tilde{p}_{C,k}$ may seem weak but they do not change the result on the phase transition because first terms of the sum give the right order on $\rho$ and $\delta$. To ensure that, see figure \ref{fig:CompPhase}, which compare the numerical bound with the one of Theorem \ref{thm:BorneR}. 
\end{remark}
\begin{figure}[!t]
\begin{center}
\includegraphics[width=0.7\textwidth]{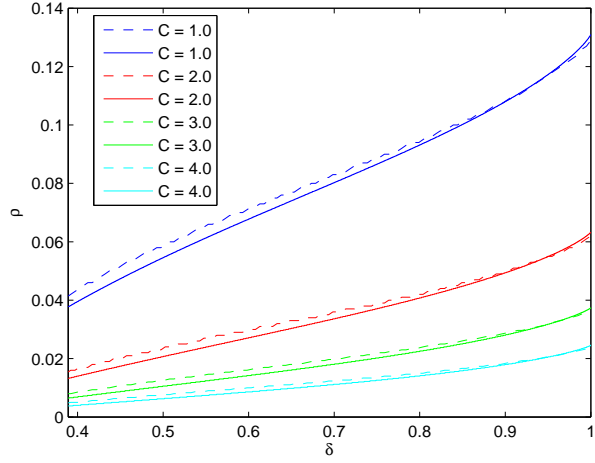}
\end{center}
\caption{Comparison between the phase transition of Theorem \ref{thm:BorneR} and the numerical approximation given by Theorem \ref{thm:Main} ( $\Pi \simeq 0$ with $n=200.000$, dashed line). From top to bottom, $C=1,2,3,4$.}
\label{fig:CompPhase}
\end{figure}

 \appendix
\section{Stirling's formula}
\begin{lemma}
\label{lem:AsymptoticGamma}
Let $z>0$ then there exists $\theta\in(0,1)$ such that:
\[
\G(z+1)=(2\pi z)^{\frac12}\left(\frac ze\right)^{z}\exp(\frac\theta{12z})\,.
\]
In particular, if $z>1/12$,
\[ \left(\frac{z}{e}\right)^z \leq \G(z+1) \leq \sqrt{2 \pi z} \left(\frac{z}{e}\right)^z\]

\end{lemma}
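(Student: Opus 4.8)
The plan is to prove the exact identity first and then read off the two crude one-sided estimates. For real $z>0$ I would start from Binet's classical first formula, $\log\G(z)=(z-\tfrac12)\log z-z+\tfrac12\log(2\pi)+\mu(z)$, with remainder $\mu(z)=2\int_0^{\infty}\frac{\arctan(t/z)}{e^{2\pi t}-1}\,\d t$. Adding $\log z$ to both sides converts the polynomial part into $(z+\tfrac12)\log z-z+\tfrac12\log(2\pi)$, and exponentiating then gives $\G(z+1)=\sqrt{2\pi z}\,(z/e)^{z}\,e^{\mu(z)}$, which already has the shape of the claimed identity with the \emph{same} remainder $\mu(z)$.

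The heart of the matter is the two-sided control $0<\mu(z)<\frac{1}{12z}$. Positivity is immediate since the integrand is positive. For the upper bound I would use the elementary inequality $\arctan(x)\le x$ to get $\mu(z)\le \frac{2}{z}\int_0^\infty \frac{t}{e^{2\pi t}-1}\,\d t$, together with the exact value $\int_0^\infty \frac{t}{e^{2\pi t}-1}\,\d t=\frac{1}{24}$, which follows from $\int_0^\infty \frac{u}{e^u-1}\,\d u=\zeta(2)=\pi^2/6$ after the substitution $u=2\pi t$. This yields $\mu(z)<\frac{1}{12z}$, so setting $\theta:=12z\,\mu(z)$ produces $\theta\in(0,1)$ and the first display of the lemma.

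For the particular case, the lower bound $(z/e)^{z}\le \G(z+1)$ comes from discarding $e^{\mu(z)}\ge1$; the only subtle range is $\tfrac1{12}<z<\tfrac1{2\pi}$, where $\sqrt{2\pi z}<1$, which I would handle by the monotonicity argument that $g(z):=\log\G(z+1)-z\log(z/e)$ satisfies $g'(z)=\psi(z+1)-\log z>0$ (a consequence of $\psi(z+1)=\psi(z)+1/z$ and $\psi(z)>\log z-1/z$) together with $g(0^+)=0$. The upper bound is obtained by controlling the exponential remainder: since $\theta\in(0,1)$ and $z>\tfrac1{12}$ one has $\exp(\theta/(12z))\le \exp(1/(12z))\le e$, which bounds $\G(z+1)$ by $\sqrt{2\pi z}\,(z/e)^{z}$ up to this absolute constant.

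The only non-formal step is the sharp remainder estimate $\mu(z)<\frac1{12z}$; everything else is bookkeeping. This is also exactly where the restriction $z>\tfrac1{12}$ is used, namely to keep the exponential correction $\exp(1/(12z))$ uniformly bounded, which is all that is needed since in the application of Section~4 such bounded multiplicative factors are absorbed into the polynomial prefactor $\mathbf R(s,n,p)$.
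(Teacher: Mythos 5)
Your argument is correct and complete, but it is genuinely different from what the paper does: the paper's entire proof is the citation ``see \cite{abramowitz1965stegun} Eq.\ 6.1.38'', i.e.\ it simply quotes the exact Stirling formula with remainder $e^{\theta/(12z)}$, whereas you rederive that formula from Binet's first integral and the bound $0<\mu(z)<\tfrac{1}{12z}$ obtained from $\arctan x\leq x$ and $\int_0^\infty \frac{t}{e^{2\pi t}-1}\,\d t=\tfrac1{24}$. That is exactly the classical proof behind the cited equation, so nothing is lost, and your monotonicity argument for the lower bound on the range $\tfrac1{12}<z<\tfrac1{2\pi}$ (where $\sqrt{2\pi z}<1$, so one cannot just drop the prefactor) fills a step the paper leaves entirely implicit.

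More importantly, your analysis exposes a defect in the statement itself rather than in your proof: since $\theta>0$, the exact identity forces $\G(z+1)>\sqrt{2\pi z}\,(z/e)^z$ for \emph{every} $z>0$ (already at $z=1$ one has $\G(2)=1>\sqrt{2\pi}/e\approx 0.922$), so the displayed upper bound $\G(z+1)\leq\sqrt{2\pi z}\,(z/e)^z$ cannot be literally true. What you prove, namely $\G(z+1)\leq e\,\sqrt{2\pi z}\,(z/e)^z$ for $z>\tfrac1{12}$ via $\exp(\theta/(12z))<e$, is the correct version, and as you note the extra absolute constant is harmless where the lemma is used in Section 4, since such factors are absorbed into the polynomial prefactor $\mathbf R(s,n,p)$. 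It would be worth stating the lemma with that factor of $e$ made explicit.
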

\begin{proof}
See \cite{abramowitz1965stegun} Eq. 6.1.38.
\end{proof}
\section{Concentration $\psi_l(C)$}
 \begin{lemma} \label{lem:2}
Let $ C\geq 1$, then, except with a probability smaller than:
\eq
\psi_l ( C) :=  {l \choose s} \left(\frac{C^2 4 s}{\pi}\right)^\frac{l-s}{2} \frac{\G(l/2)}{\G(s/2) \G(l-s+1)},\notag
\qe
 a standard Gaussian vector $g\in\R^{l}$ enjoys for all $S \subset \{1, \ldots, l\}, |S|\leq s$,
 \[
 C\|g_S\|_1\leq \|g_{S^c}\|_1.
 \]
\end{lemma}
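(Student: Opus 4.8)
The plan is to bound the probability of the complementary (bad) event, namely that there exists some $S$ with $|S|\leq s$ violating the inequality, and to show this probability is at most $\psi_l(C)$. First I would observe that the bad event is
\[
\bigcup_{|S|\leq s}\big\{C\|g_S\|_1>\|g_{S^c}\|_1\big\}\,,
\]
and that it suffices to restrict to $|S|=s$: if $S\subset S'$ with $|S'|=s$, then $\|g_{S'}\|_1\geq\|g_S\|_1$ and $\|g_{(S')^c}\|_1\leq\|g_{S^c}\|_1$, so a violation on $S$ forces one on $S'$. Since the coordinates of $g$ are exchangeable, a union bound over the $\binom{l}{s}$ subsets of size $s$ then gives
\[
\P(\text{bad event})\leq\binom{l}{s}\,\P\big(C\|g_{S_0}\|_1>\|g_{S_0^c}\|_1\big)\,,\qquad S_0:=\{1,\dots,s\}\,,
\]
reducing the whole problem to controlling the probability for one fixed support.

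For this fixed support I would condition on $g_{S_0}$ and set $a:=C\|g_{S_0}\|_1$; the remaining $m:=l-s$ coordinates $(g_j)_{j\in S_0^c}$ are i.i.d.\ and independent of $g_{S_0}$, so I must control $\P\big(\sum_{j\in S_0^c}|g_j|<a\,\big|\,g_{S_0}\big)$. The key observation is that each $|g_j|$ has density $\sqrt{2/\pi}\,e^{-x^2/2}\leq\sqrt{2/\pi}$ on $[0,\infty)$, so integrating the product of these densities over the simplex $\{x\in\R^m,\ x_j\geq0,\ \sum_j x_j<a\}$, whose Lebesgue volume is $a^m/m!$, yields
\[
\P\Big(\sum_{j\in S_0^c}|g_j|<a\ \Big|\ g_{S_0}\Big)\ \leq\ \Big(\tfrac2\pi\Big)^{m/2}\frac{a^m}{m!}\ =\ \Big(\tfrac2\pi\Big)^{(l-s)/2}\frac{\big(C\|g_{S_0}\|_1\big)^{l-s}}{(l-s)!}\,.
\]

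It then remains to take expectations, i.e.\ to bound $\E\big[\|g_{S_0}\|_1^{\,l-s}\big]$. I would use Cauchy--Schwarz, $\|g_{S_0}\|_1\leq\sqrt{s}\,\|g_{S_0}\|_2$, together with $\|g_{S_0}\|_2^2\sim\chi^2_s$ and the moment identity $\E[(\chi^2_s)^r]=2^r\,\G(s/2+r)/\G(s/2)$ at $r=(l-s)/2$, giving $\E\big[\|g_{S_0}\|_1^{\,l-s}\big]\leq(2s)^{(l-s)/2}\,\G(l/2)/\G(s/2)$. Collecting powers via $C^{l-s}(2/\pi)^{(l-s)/2}(2s)^{(l-s)/2}=(4C^2s/\pi)^{(l-s)/2}$ and $(l-s)!=\G(l-s+1)$, the per-support probability is at most $(4C^2s/\pi)^{(l-s)/2}\G(l/2)/(\G(s/2)\G(l-s+1))$, and multiplying by $\binom{l}{s}$ reproduces $\psi_l(C)$ exactly. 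The main obstacle is not any single estimate but the bookkeeping: both the simplex-volume bound (dropping the Gaussian exponentials) and the Cauchy--Schwarz step are individually lossy, so the point to verify is that these two deliberate losses are precisely what is needed for the constants to close into the stated $\psi_l(C)$, with the monotonicity reduction to $|S|=s$ being the only structural subtlety.
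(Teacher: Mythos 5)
Your proof is correct and reaches the stated bound by essentially the same route as the paper: the same two key estimates (bounding the Gaussian mass of the simplex $\{x\geq 0,\ \sum x_j<a\}$ by the density at the origin times its volume $a^{l-s}/(l-s)!$, then $\|g_{S_0}\|_1\leq\sqrt{s}\,\|g_{S_0}\|_2$ combined with the $\chi$ moments) produce exactly $\psi_l(C)$. The only cosmetic difference is the bookkeeping of the combinatorial factor: the paper works with the ordered statistics of $|g_1|,\dots,|g_l|$ (so the worst support is the top-$s$ coordinates and $\binom{l}{s}$ emerges from the multinomial factors $2^l l!/((l-s)!\,s!\cdots)$), whereas you obtain the same $\binom{l}{s}$ via the monotonicity reduction to $|S|=s$ and a union bound over supports.
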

\begin{proof}
Let $ \xi_C := \{ v \in \mathds{R}^l \text{  that does not satisfy  } NSP(s,C) \} $
and consider the joint law of standard Gaussian ordered statistics $(W_{(1)}, \dots, W_{(l)})$, then
\begin{align*}
\mathds{P}(\xi) &= 2^l l!  \int_{\mathds{R}^l}  \mathds{1}_{t \in \xi_C} \mathds{1}_{t_1 \geq \dots \geq t_l}\varphi(t_1) \dots \varphi(t_l) dt_1 \dots dt_l \\
&= \frac{2^l l!}{(l-s)!} \int_{\mathds{R}^{l-s} \times \mathds{R}^s}\mathds{1}_{t \in \xi_C} \mathds{1}_{t_1 \geq \dots \geq t_s} \varphi(t_1) \dots \varphi(t_s) \varphi(t_{s+1}) \dots \varphi(t_l) dt_1 \dots dt_l \\
&\leq \frac{2^l l!}{(l-s)!}  \left(\frac{1}{2\pi}\right)^\frac{l-s}{2} \int_{\mathds{R}^s} \mathds{1}_{t_1 \geq \dots \geq t_s} \frac{\lambda_{l-s}(B_1(C(t_1 + \dots + t_l)))}{2^{l-s}} \varphi(t_1) \dots \varphi(t_s) dt_1 \dots dt_s \,,
\end{align*}
where the last inequality relies on $\mathds{P}(\mathcal{N}(0,I_{l-s}) \in B_1(t_1 + \dots + t_s))$ is bounded by the density function of $\mathcal{N}(0,I_{l-s})$ in $0$ times the Lebesgue measure of the $l_1$ ball of radius $C(t_1 + \dots + t_s)$ in $\mathds{R}^{l-s}$. Finally, as 
\[ \lambda_{l-s}(B_1(R)) = \frac{(2R)^{l-s}}{(l-s)!}\,,\]
it implies,
\begin{align*}
\mathds{P}(\xi) &\leq \left(\frac{2 C^2}{\pi}\right)^{\frac{l-s}{2}}\frac{2^s l!}{(l-s)!^2} \int_{\mathds{R}^s} (t_1 + \dots + t_s)^{l-s}\mathds{1}_{t_1 \geq \dots \geq t_s}\varphi(t_1) \dots \varphi(t_s) dt_1 \dots dt_s \\
&= \left(\frac{2 C^2}{\pi}\right)^{\frac{l-s}{2}} \frac{ l!}{(l-s)!^2 s!} \mathds{E}\left((|W_1| + \dots + |W_s|)^{l-s}\right) \\
&= \left(\frac{2 C^2}{\pi}\right)^{\frac{l-s}{2}} {l \choose s} \frac{1}{(l-s)!} \mathds{E}( \lVert W \rVert_1^{l-s}) \,,
\end{align*}
where $W$ is a standard Gaussian vector in $\mathds{R}^s$. At last, using bound on $l_1$ norm, it comes,
\begin{align*}
\mathds{P}(\xi) &\leq \left(\frac{2 C^2}{\pi}\right)^{\frac{l-s}{2}} {l \choose s} \frac{1}{(l-s)!} s^\frac{l-s}{2} \mathds{E}( \lVert W \rVert_2^{l-s}) \\
& = \left(\frac{2 C^2}{\pi}\right)^{\frac{l-s}{2}} {l \choose s} \frac{1}{(l-s)!} s^\frac{l-s}{2} 2^\frac{l-s}{2} \frac{\G(l/2)}{\G(s/2)} \,,
\end{align*}
where the last equality follows from classical results on the moment of the $\chi$ distribution.
\end{proof}

\section{Spherical Hessian}
\begin{lemma}\label{spherical} Denote $X''(.)$ the Hessian of $X(.)$ along the sphere $\mathds{S}^{m-1}$ then
\[X''(t)  = -X(t) I_{m-1}.\]
\end{lemma}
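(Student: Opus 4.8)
The plan is to exploit the local structure established in Lemma~\ref{lem:IndependentTangentSpace}: almost surely $t$ lies in $\dot{\mathds S}_\emptyset\setminus\mathcal W$, so that in a neighbourhood of $t$ the process $X(.)$ coincides, on the sphere, with the restriction of a \emph{single} linear form $u\mapsto\langle \dot X(t),u\rangle$ on $\R^m$, where $\dot X(t)$ is the (locally constant) Euclidean gradient. Because this ambient function is linear, its Euclidean Hessian vanishes identically; hence the entire contribution to the spherical Hessian must come from the extrinsic curvature of $\mathds S^{m-1}$. I would therefore reduce the statement to a second derivative taken along great circles, which automatically encodes that curvature.

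Concretely, I fix a unit tangent vector $u\in t^\bot$ and consider the great circle $\gamma(\theta)=\cos\theta\,t+\sin\theta\,u$, which stays on $\mathds S^{m-1}$ and satisfies $\gamma(0)=t$, $\dot\gamma(0)=u$. Since $\gamma$ is a geodesic, its covariant acceleration vanishes, so the Riemannian Hessian can be read directly off the ordinary second derivative: $X''(t)(u,u)=\frac{\d^2}{\d\theta^2}\big|_{\theta=0}X(\gamma(\theta))$. Using local linearity I then write $X(\gamma(\theta))=\cos\theta\,\langle\dot X(t),t\rangle+\sin\theta\,\langle\dot X(t),u\rangle$ and differentiate twice to obtain $X''(t)(u,u)=-\langle\dot X(t),t\rangle$.

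The last step is to identify $\langle\dot X(t),t\rangle$ with $X(t)$: since the local linear form agrees with $X$ on the sphere and $t$ is a unit vector, one has $X(t)=\langle\dot X(t),t\rangle$ (equivalently, this is the normal derivative of the degree-one homogeneous extension, already used in Lemma~\ref{lem:IndependentTangentSpace}). Hence $X''(t)(u,u)=-X(t)$ for every unit $u\in t^\bot$, so the quadratic form equals $-X(t)\|u\|_2^2$; by polarization the associated self-adjoint operator on $t^\bot$ is $-X(t)I_{m-1}$, which is exactly the claim.

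The only delicate point is justifying that the spherical Hessian may be computed as a plain second derivative along the great circle instead of carrying the second fundamental form of the sphere explicitly; this is precisely where the geodesic equation is invoked, and it is what lets the linearity of the ambient map do all the work. Everything else is a one-line trigonometric differentiation, so I expect no genuine obstacle once the geodesic reduction is in place.
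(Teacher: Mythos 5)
Your proof is correct and in substance identical to the paper's: both arguments rest on the local linearity of $X(.)$ near $t$ (so the ambient Hessian vanishes and only the curvature contribution $-\langle\dot X(t),t\rangle I_{m-1}$ survives) together with the identity $\langle\dot X(t),t\rangle=X(t)$. The only difference is cosmetic --- you compute the second derivative along great circles via the geodesic characterization of the Riemannian Hessian, whereas the paper performs the same Taylor expansion in the orthogonal-projection chart $Y(t_2,\ldots,t_m)=X(\sqrt{1-t_2^2-\cdots-t_m^2},t_2,\ldots,t_m)$; the $\cos\theta$ expansion and the $\sqrt{1-|t'|^2}$ expansion produce the identical $-X'_1(e_1)I_{m-1}$ term.
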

\begin{proof}
To compute the spherical Hessian, since every point plays the same role, we can compute it at the "east pole"  $t = e_1$,  the first vector of the canonical basis.  Consider a basis $(w_2, \dots, w_n)$ of the tangent space  at $t = e_1$ and use, as a chart of the sphere,  the orthogonal projection on this space. 
 
  Let $Y (t_2, \dots,t_m)$   be the process $X(.)$  written in this chart in some neighborhood of $e_1$.  By the Pythagorean theorem,
  $$
  Y (t_2, \ldots,t_m)=X( \sqrt {1- t_2^2-\cdots-t_m^2} , t_2, \ldots, t_m).
  $$
   Plugging this into  the order two Taylor expansion of the process $X(.)$ at $t = e_1$ gives 
  \begin{align*}
  Y (t_2, \ldots,t_m) = X(e_1) + t_2 & X'_2(e_1) + \cdots +t_mX'_m(e_1) +
  \sum_{2\leq i,j\leq m} \frac{t_i t_j}{2} X''_{ij}(e_1) \\ &- X'_1(e_1)\frac{ t_2^2+\cdots +t_m^2}{2} + o(t_2^2+\cdots +t_m^2),
  \end{align*}
  where $X'_k(e_1) = \frac{\partial X}{\partial w_k}(e_1)$ and $X''_{ij}(e_1) = \frac{\partial X}{\partial w_i \partial w_j}(e_1) $. As the process is locally linear, in a small enough neighborhood of $e_1$, $X''_{ij}$ is equal to zero and, by identification,
  \[ X''(e_1) = - X'_1(e_1) I_{m-1} = - X(e_1) I_{m-1}  \]
  giving the desired result.
  \end{proof}

 \bibliographystyle{plain}
 \bibliography{biblio}

\end{document}